\newtheorem{thm}{Theorem}[section]
\newtheorem{rem}{Remark}[section]
\newtheorem{defn}{Definition}[section]
\newtheorem{lemm}{Lemma}[section]
\newtheorem{prop}{Proposition}[section]
\newcommand{\de}{\delta}
\newcommand{\la}{\lambda}
\newcommand{\ep}{\varepsilon}
\newcommand{\ph}{\varphi}
\newcommand{\Om}{\Omega}
\newcommand{\si}{\sigma}
\newcommand{\al}{\alpha}
\newcommand{\pa}{\partial}
\newcommand{\mN}{\mathbb{N}}
\newcommand{\mR}{\mathbb{R}}
\newcommand{\mE}{\mathbb{E}}
\newcommand{\mT}{\mathbb{T}}
\newcommand{\nn}{\nonumber}
\newcommand{\hs}{\hspace}
\title{A time-splitting approach to quasilinear Degenerate Parabolic Stochastic Partial Differential Equations}
\author[1]{Kazuo Kobayasi}
\author[*,2]{Dai Noboriguchi}
\affil[1,2]{Department of Mathematics, Education and Integrated Arts and Sciences, Waseda University, 1-6-1 Nishi-Waseda, Shinjuku-ku, Tokyo, 169-8050, Japan}
\date{revised version: April 21, 2016 \\ \mbox{} \\ Accepted for publication in Differential and Integral Equations}
\begin{document}
\maketitle
\begin{abstract}
In this paper, we discuss the Cauchy problem for a degenerate parabolic-hyperbolic equation with a multiplicative noise. We focus on the existence of a solution. Using nondegenerate smooth approximations, Debussche, Hofmanov\'a and Vovelle \cite{DeHoVo} proved the existence of a kinetic solution. On the other hand, we propose to construct a sequence of approximations by applying a time splitting method and prove that this converges strongly in $L^1$ to a kinetic solution. This method will somewhat give us not only a simpler and more direct argument but an improvement over the existence result.
\end{abstract}
\footnote[0]{\hs{-5mm} $^*$Corresponding author. \\ \ {\it Email addresses:} kzokoba@waseda.jp (K. Kobayasi), 588243-dai@fuji.waseda.jp (D. Noboriguchi). \\ \ {\it The current address:} noboriguchi@kushiro-ct.ac.jp (D. Noboriguchi).}

%%%%%%%%%%%%%%%%%%%%%%%%%%%%%%%%%%%%%%%%
%%%%%%%%%%%%%%%%%%%%%%%%%%%%%%%%%%%%%%%%
%%%%%%%%%%%%%%%%%%%%%%%%%%%%%%%%%%%%%%%%
\section{Introduction} %%% section 1 %%%
%%%%%%%%%%%%%%%%%%%%%%%%%%%%%%%%%%%%%%%%
%%%%%%%%%%%%%%%%%%%%%%%%%%%%%%%%%%%%%%%%
%%%%%%%%%%%%%%%%%%%%%%%%%%%%%%%%%%%%%%%%

In this paper we study a quasilinear degenerate parabolic stochastic partial differential equation of the following type
\begin{gather}
du + {\rm div} (B (u)) dt = {\rm div} (A(u) \nabla u) dt + \Phi (u) dW(t) \hspace{5mm} {\rm in} \hspace{3mm} \mT^d \times (0,T), \label{dp1}
\end{gather}with the initial condition
\begin{gather}
u (\cdot,0) = u_0 (\cdot) \hspace{5mm} {\rm in} \hspace{3mm} \mT^d, \label{dp2}
\end{gather}where $\mT^d$ is the $d$-dimensional torus and $W$ is a cylindrical Wiener process defined on a stochastic basis $(\Omega, \mathscr{F}, ( \mathscr{F}_t ), P)$. More precisely, $(\mathscr{F}_t)$ is a complete right-continuous filtration and $W (t) = \sum_{k=1}^\infty \beta_k (t) e_k$ with $(\beta_k)_{k \geq 1}$ being mutually independent real-valued standard Wiener processes relative to $(\mathscr{F}_t)$ and $(e_k)_{k \geq 1}$ a complete orthonormal system in a separable Hilbert space $H$ (cf. \cite{DaZa} for example).

%%%%%%%%%%%%%%
%%% Indent %%%
%%%%%%%%%%%%%%

In the deterministic and completely degenerate case of $g = 0$ and $A=0$, the problem has been extensively studied by many authors \cite{ImVo}, \cite{Kr}, \cite{LiPeTa}, \cite{MaNeRoRu}, \cite{Pe}. It is well known that a smooth solution is constant along characteristic curves, which can intersect each other and shocks can occur. Consequently classical solutions do not exist in general on the whole interval $[0,T]$ and some weak solution must be considered. However, it was shown that weak or distributional solutions lack uniqueness and therefore additional conditions need to ensure uniqueness of weak solutions.

%%%%%%%%%%%%%%
%%% Indent %%%
%%%%%%%%%%%%%%

In the deterministic case that $\Phi = 0$ the concept of entropy solution was introduced by Kru\v{z}kov \cite{Kr} for a first-order scalar conservation law (i.e., $A=0$) and then extended by Carrillo \cite{Ca} for a quasilinear degenerate parabolic equation. On the other hand, the concept of kinetic solution was introduced by Lions, Perthame and Tadmor \cite{LiPeTa} for a deterministic first-order scalar conservation law and futher studied in \cite{ImVo}, \cite{Pe}. The relationship between entropy solution and kinetic one will be found in \cite{Pe}, for example. These solutions have been extended for quasilinear degenerate parabolic equations, see \cite{Ca}, \cite{ChPe}, \cite{Ko}, \cite{KoOh}, \cite{MiVo}, for example.

%%%%%%%%%%%%%%
%%% Indent %%%
%%%%%%%%%%%%%%

\indent To perturb a stochastic term is natural for applications, which appears in wide variety of fields as physics, engineering and others. There are many papers concerning first-order scalar conservation laws with stochastic forcing. The uniqueness and the existence of entropy solution to the equations with an additive noise ($\Phi$ independent of $u$) have been studied in \cite{Ki}, with multiplicative noise in \cite{BaVaWi}, \cite{ChDiKa}, \cite{FeNu}. Among other things, Debussche and Vovelle \cite{DeVo}, \cite{DeVo2} established the uniqueness and the existence of kinetic solutions to the equation with multiplicative noise by using a kinetic formulation which keeps track of the dissipation of noise by solutions. Also see \cite{BaVaWi}, \cite{KoNo} in the case of initial-boundary value problem.

%%%%%%%%%%%%%%
%%% Indent %%%
%%%%%%%%%%%%%%

\indent There are a few papers concerning the Cauchy problem (\ref{dp1}), (\ref{dp2}) for general degenerate parabolic stochastic equations. First, Hofmanov\'a \cite{Ho} proved the uniqueness and the existence of a kinetic solution in the semilinear case: in (\ref{dp1}) the diffusion matrix $A(u)$ is replaced by a matrix $A(x)$ not depending on $u$ but possibly depending on $x$. Then, Debussche, Hofmanov\'a and Vovelle \cite{DeHoVo} studied the quasilinear case of type (\ref{dp1}), in a similar framework as in \cite{DeVo}, \cite{DeVo2}. We can also find the paper \cite{BaVaWi2} on degenerate SPDEs.

%%%%%%%%%%%%%%
%%% Indent %%%
%%%%%%%%%%%%%%

\indent We recall the proofs of the existence of kinetic solutions in the previous papers \cite{DeHoVo}, \cite{DeVo}, \cite{DeVo2}. In the case of hyperbolic conservation laws \cite{DeVo}, \cite{DeVo2} the authors introduced a notion of generalized kinetic solution and obtained a comparison theorem for any two generalized kinetic solutions, which assures us that any generalized kinetic solution is actually necessarily a kinetic one. This reduction result simplifies the proof of existence since only weak convergence of approximate viscous solutions is needed.

%%%%%%%%%%%%%%
%%% Indent %%%
%%%%%%%%%%%%%%

\indent On the other hand, in the case of degenerate parabolic equations the comparison result for generalized kinetic solutions cannot be obtained. In \cite{DeHoVo} such a result is obtained only for kinetic solutions and therefore strong convergence of approximate solutions is needed for the proof of existence. Towards this end a compactness argument is employed: uniform estimates together with the Prokhorov theorem and the Skorokhod theorem yield the strong convergence of the subsequence of approximate solutions on another probability space and the limit becomes a martingale kinetic solution. The existence of kinetic solution is then obtained by virtue of Gy\"ongy and Krylov characterization of convergence in probability (\cite{GyKr}). Thus, in order to obtain the existence of kinetic solutions the authors \cite{DeHoVo} approximated the equation (\ref{dp1}) by nondegenerate equations having smooth coefficients one after another. However it would seem that it is not easy to show the existence of solutions to such approximate equations.

%%%%%%%%%%%%%%
%%% Indent %%%
%%%%%%%%%%%%%%

\indent Our purpose of the paper is to give another proof of existence of kinetic solutions to the Cauchy problem (\ref{dp1}), (\ref{dp2}) and to extend slightly the existence result of \cite{DeHoVo} by a time-splitting method. We will employ the notion of (stochastic) kinetic solution used in \cite{DeHoVo}; moreover, we will frequently use the arguments developed in \cite{DeVo}, \cite{DeVo2}. As was mentioned in the paper of Holden and Risebro \cite{HoRi}, our method is to split the effect of deterministic degenerate parabolic equation and the stochastic source term in order to construct approximate kinetic solutions. We refer to the paper of Bauzet \cite{Ba} in which the author applied time-splitting method to a stochastic scalar conservation law in the framework of the space of integrable functions with bounded variation.

%%%%%%%%%%%%%%
%%% Indent %%%
%%%%%%%%%%%%%%

To describe our strategy in more detail, let $R(t,s) v_s$ denote the solution of the purely stochastic equation (\ref{R}) below with the initial $v_s$ at $t=s$, and let $S(t-s) u_s$ denote the solution of the deterministic degenerate parabolic equation (\ref{S}) below with the initial $u_s$ at $t=s$. Given $\ep > 0$ let $0 = t^\ep_0 < t^\ep_1 < \cdots < t^\ep_{N_\ep} = T$ be a partition of the interval $[0,T]$ such that the mesh size tends to $0$ as $\ep \to 0$. Consider the type of Lie-Trotter's product formula:
\begin{gather*}
 v^\ep (t) = R (t,t^\ep_n) \prod_{k = 1}^n \big[ S(t^\ep_{k} - t^\ep_{k-1}) R(t^\ep_{k}, t^\ep_{k-1}) \big] u_0
\end{gather*}for $t \in [0,T)$ where $n$ is the integer such that $t \in [t^\ep_n, t^\ep_{n+1})$. Then we prove that $v^\ep (x,t)$ directly converges in the $L^1$ sense to a unique kinetic solution $u(x,t)$ of (\ref{dp1}), (\ref{dp2}) as $\ep \to 0$. In order to discuss this problem in the $L^1$ setting (not in the BV setting) we need to choose an appropriate partition $\{ t^\ep_n \}$ of $[0,T]$; we can not take a partition arbitrarily. It is shown in Section 3 that such a desirable partition indeed exists. Then we prove in Section 4 that $v^\ep (x,t)$ converges strongly in $L^1$ to $u (x,t)$ as $\ep \to 0$ by using the technique of ``doubling of variables" (\cite[Proposition 3.2]{DeVo}).

%%%%%%%%%%%%%%
%%% Indent %%%
%%%%%%%%%%%%%%

\indent We now give the precise assumptions in this paper:
\begin{enumerate}
\item[${\rm (H_1)}$] The flux function $B: \mathbb{R} \to \mathbb{R}^d$ is of class $C^2$ and its derivatives denoted by $b = (b_1 , \ldots , b_d)$ have at most polynomial growth.
\item[${\rm (H_2)}$] The diffusion matrix $A = (A_{ij}): \mathbb{R} \to \mathbb{R}^{d \times d}$ is symmetric and positive semidefinite. Its square-root matrix denoted by $\si$ is also symmetric and positive semidefinite. We assume that $\si$ is bounded and locally $\gamma$-H\"older continuous for some $\gamma > 1/2$.
\item[${\rm (H_3)}$] For each $z \in L^2 (\mT^d)$, $\Phi (z) : H \to L^2 (\mT^d)$ is defined by $\Phi (z) e_k = g_k (\cdot, z(\cdot))$, where $g_k \in C (\mT^d \times \mR)$ satisfies the following conditions:
\begin{gather}
G^2 (x,\xi) = \sum_{k=1}^\infty | g_k (x,\xi)|^2 \leq C(1+ | \xi |^2), \label{H3} %%% 
\\
\sum_{k=1}^\infty | g_k (x,\xi) - g_k (y,\zeta) |^2 \leq C \Big( |x-y|^2 + |\xi - \zeta | r(|\xi - \zeta |) \Big) \label{H4} %%%
\end{gather}for every $x,y \in \mT^d$, $\xi, \zeta \in \mathbb{R}$. Here, $C$ is a constant and $r$ is a continuous non-decreasing function on $\mathbb{R}_+$ with $r(0) = 0$.
\end{enumerate}These assumptions are the same as that of \cite{DeHoVo}, but it is assumed in \cite{DeHoVo} that the function $r$ also satisfies the following additional condition:
\begin{gather*}
 r (\de) \leq C \de^\al, \hs{4mm} \de < 1
\end{gather*}for some $\al > 0$.

%%%%%%%%%%%%%%
%%% Indent %%%
%%%%%%%%%%%%%%

\indent This paper is organized as follows. In Section 2, we introduce the notion of kinetic solutions to (\ref{dp1}), (\ref{dp2}) by using the kinetic formulation and state the main result of existence. We construct approximate solutions to (\ref{dp1}), (\ref{dp2}) and give some fundamental lemmas concerning these approximations. In Section 3 we show that the approximate solutions are indeed defined on the whole interval $[0,T)$. Section 4 is devoted to the proof of the main theorem.

%%%%%%%%%%%%%%%%%%%%%%%%%%%%%%%%%%%%%%%%%%%%%%%%%%%%%%%%%%%%%
%%%%%%%%%%%%%%%%%%%%%%%%%%%%%%%%%%%%%%%%%%%%%%%%%%%%%%%%%%%%%
%%%%%%%%%%%%%%%%%%%%%%%%%%%%%%%%%%%%%%%%%%%%%%%%%%%%%%%%%%%%%
\section{Preliminaries and the main result} %%% section 2 %%%
%%%%%%%%%%%%%%%%%%%%%%%%%%%%%%%%%%%%%%%%%%%%%%%%%%%%%%%%%%%%%
%%%%%%%%%%%%%%%%%%%%%%%%%%%%%%%%%%%%%%%%%%%%%%%%%%%%%%%%%%%%%
%%%%%%%%%%%%%%%%%%%%%%%%%%%%%%%%%%%%%%%%%%%%%%%%%%%%%%%%%%%%%

We give the definition of solution in this section. Define
\begin{gather*}
f^+ (u,\xi)= \begin{cases}
1 & \text{if $\xi < u$}, \\
0 & \text{if $\xi \geq u$},
\end{cases}
\quad \text{and} \quad f^- (u,\xi)= \begin{cases}
-1 & \text{if $\xi > u$}, \\
0 & \text{if $\xi \leq u$}.
\end{cases}
\end{gather*}

%%%%%%%%%%%%%%
%%% Indent %%%
%%%%%%%%%%%%%%

 Hereafter, we will use the notation $A:B = \sum_{i,j} a_{ij} b_{ij}$ for two matrices $A = (a_{ij})$, $B = (b_{ij})$ of the same size.

%%%%%%%%%%%%%%%%%%%%%%%%%%%%%%%%%%%%%%%%%
%%% The definition of kinetic measure %%%
%%%%%%%%%%%%%%%%%%%%%%%%%%%%%%%%%%%%%%%%%

\begin{defn}[Kinetic measure]
A map $m$ from $\Omega$ to $\mathcal{M}_b^+(\mT^d \times [0,T) \times \mathbb{R})$, the set of non-negative finite measures over $\mT^d \times [0,T) \times \mathbb{R}$, is said to be a kinetic measure if
\begin{enumerate}
\item[$\mathrm{(i)}$] $m$ is weakly measurable, i.e., for each $\phi \in C_b (\mT^d \times [0,T) \times \mR)$ the map $m(\phi): \Om \to \mR$ is measurable,
\item[$\mathrm{(ii)}$] $m$ vanishes for large $\xi$ in the following sense:
\begin{gather}
 \lim_{R \to \infty} \mE m (\mT^d \times [0,T) \times \{ \xi \in \mR; R \leq |\xi| \}) = 0, \label{decay} %%%
\end{gather}
\item[$\mathrm{(iii)}$] for all $\phi \in C_b (\mT^d \times \mathbb{R})$, the process
\begin{gather}
 t \mapsto \int_{\mT^d \times [0,t] \times \mathbb{R}} \phi (x,\xi) \ dm (x, s ,\xi) \label{predictable} %%%
\end{gather}is predictable.
\end{enumerate}
\end{defn}

%%%%%%%%%%%%%%%%%%%%%%%%%%%%%%%%%%%%%%%%%%
%%% The definition of kinetic solution %%%
%%%%%%%%%%%%%%%%%%%%%%%%%%%%%%%%%%%%%%%%%%

\begin{defn}[Kinetic solution]
Let $u_0 \in L^p(\Omega, \mathscr{F}_0, dP ;L^p (\mT^d))$ and $u \in L^p (\Om \times [0,T), \mathcal{P}, dP \otimes dt; L^p (\mT^d)) \cap L^p (\Omega; L^\infty (0,T; L^p (\mT^d)))$ for all $p \in [1,\infty)$, where $\mathcal{P}$ is the predictable $\sigma$-algebra on $\Omega \times [0,T)$ associated to $(\mathcal{F}_t)$. Assume that ${\rm div} \int_0^u \si (\zeta) \ d\zeta \in L^2 (\Om \times \mT^d \times [0,T))$ and for any $\phi \in C_b (\mR)$ the following chain rule formula holds true:
\begin{gather}
 {\rm div} \int_0^u \phi (\zeta) \si (\zeta) \ d\zeta = \phi (u) \hs{0.5mm} {\rm div} \int_0^u \si (\zeta) \ d\zeta \hs{5mm} {\rm in} \hs{5mm} \mathcal{D}' (\mT^d) \hs{3mm} {\rm a.e.} \ (\omega, t). \label{chain} %%%
\end{gather}Let $n_1: \Om \to \mathcal{M}_b^+ (\mT^d \times [0,T) \times \mR)$ be defined as follows: for any $\phi \in C_b (\mT^d \times [0,T) \times \mR)$,
\begin{gather}
n_1 (\phi) = \int_0^T \int_{\mT^d} \int_\mR \phi (x,t,\xi) \left| {\rm div} \int_0^u \si (\zeta) \ d\zeta \right|^2 d \de_{u (x,t)} (\xi) dx dt,\label{measure} %%%
\end{gather}where $\de_u$ is the Dirac measure centered at $u$. \\
Then $u$ is said to be a kinetic solution to (\ref{dp1})-(\ref{dp2}) with initial datum $u_0$ if there exists a kinetic measure $m \geq n_1$, $P$-a.s., such that the pair $(u,m)$ satisfies a kinetic formulation: for all $\varphi \in C_c^\infty (\mT^d \times [0,T) \times \mR)$, $P$-a.s.,
\begin{align}
& \int_0^T \int_{\mT^d} \int_\mathbb{R} f^\pm (u,\xi) (\partial_t + b(\xi) \cdot \nabla + A (\xi): D^2) \varphi \  d \xi dx dt \nonumber \\
& \hs{7mm} + \int_{\mT^d} \int_\mathbb{R} f^\pm (u_0, \xi) \varphi (0) \ d\xi dx \nn \\
& = - \sum_{k=1}^\infty \int_0^T \int_{\mT^d} g_k (x,u) \varphi (x,t,u) \ dx d\beta_k (t) \nn \\
& \hs{7mm} - \frac{1}{2} \int_0^T \int_{\mT^d} G^2 (x,u) \pa_\xi \ph (x,t,u) \ dx dt + \int_{\mT^d \times [0,T) \times \mathbb{R}} \partial_\xi \varphi \ dm. \label{kine1} %%%
\end{align}
\end{defn}

%%%%%%%%%%%%%%%%%%
%%% Remark 2.1 %%%
%%%%%%%%%%%%%%%%%%

\begin{rem}
The above definition concerning the notion of kinetic solution has been introduced in \cite{DeHoVo}. For the advantage of kinetic solutions as well as kinetic formulations in the stochastic case, we refer to \cite{DeHoVo}, \cite{DeVo}, \cite{DeVo2}, \cite{Ho}.
\end{rem}

%%%%%%%%%%%%%%
%%% Indent %%%
%%%%%%%%%%%%%%

We are now in a position to state our main result.

%%%%%%%%%%%%%%%%%%%
%%% Theorem 2.1 %%%
%%%%%%%%%%%%%%%%%%%

\begin{thm} \label{main}
 Let $u_0 \in L^p (\Om; L^p (\mT^d))$ for all $p \in [1,\infty)$. Under the assumptions ($H_1$), ($H_2$), ($H_3$), there exists a unique kinetic solution to (\ref{dp1}), (\ref{dp2}), which has almost surely continuous orbits in $L^p (\mT^d)$. Moreover,
 \begin{gather*}
  \mE \| u_1 (t) - u_2 (t) \|_{L^1 (\mT^d)} \leq \mE \| u_{1,0} - u_{2,0} \|_{L^1 (\mT^d)}
 \end{gather*}for all kinetic solutions $u_1$, $u_2$ to (\ref{dp1}), (\ref{dp2}) with initial data $u_{1,0}$ and $u_{2,0}$, respectively.
\end{thm}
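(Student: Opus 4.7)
My plan is to decompose the theorem into three essentially independent parts — the $L^1$ contraction (which subsumes uniqueness when $u_{1,0}=u_{2,0}$), existence of a kinetic solution, and continuity of orbits in $L^p(\mT^d)$ — and to handle them in that order.

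For the contraction inequality I would reproduce the doubling of variables argument of \cite[Proposition 3.2]{DeVo} in the degenerate parabolic form used in \cite{DeHoVo}. Given kinetic solutions $(u_i,m_i)$ with $m_i \geq n_1^{(i)}$, one tests the $f^+$ formulation of $u_1$ and the $f^-$ formulation of $u_2$ against a symmetric mollifier $\ph_\de(x-y)\psi_\de(\xi-\eta)\al(t)$, applies It\^o on the product $f^+(u_1)f^-(u_2)$, and uses the inequalities $m_i \geq n_1^{(i)}$ together with the chain rule (\ref{chain}) to reabsorb the second-order $A(\xi):D^2$ cross terms; this is the step where the $\gamma>1/2$ H\"older regularity of $\si$ in ($H_2$) is essential. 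The stochastic cross term is estimated by (\ref{H4}) and closed by Gronwall's inequality with the modulus $r$, so the polynomial refinement of $r$ required in \cite{DeHoVo} is not needed — this is the source of the slight improvement advertised in the abstract.

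For existence I would build the time-splitting approximation $v^\ep$ by alternating the purely stochastic evolution $R(t,s)$ solving $dv = \Phi(v)dW$ and the deterministic degenerate parabolic evolution $S(t-s)$ on each subinterval $[t^\ep_{k-1},t^\ep_k]$, as in the Lie--Trotter expression written in the introduction. Each factor is classical: $R$ by standard SDE theory in $L^2(\mT^d)$ under ($H_3$), and $S$ by the deterministic kinetic theory of \cite{Ca,ChPe,KoOh}, which in particular delivers a parabolic dissipation measure on each $S$-step. That $v^\ep$ is defined on all of $[0,T)$ is the content of Section 3 and requires a carefully tuned partition so that the $L^p$ growth on each stochastic substep does not outrun the contraction provided by the deterministic substep. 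Uniform bounds of $v^\ep$ in $L^p(\Om;L^\infty_tL^p_x)$ and an approximate kinetic identity (with an error term supported on $\bigcup\{t^\ep_n\}$) then follow from It\^o on each $R$-step and the kinetic identity for each $S$-step.

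The main obstacle, and the subject of Section 4, will be proving strong $L^1$ convergence of $v^\ep$ as $\ep \to 0$. Rather than invoking the Prokhorov/Skorokhod compactification of \cite{DeHoVo} together with the Gy\"ongy--Krylov lemma, I would apply the doubling of variables directly between $v^\ep$ and $v^{\ep'}$ on the original probability space, treating the splitting commutator as a source term whose $L^1$ mass is controlled by the choice of partition from Section 3; the ``desirable partition'' is exactly what forces this error to be $o(1)$ as $\ep \to 0$ without recourse to the BV framework of \cite{Ba}. This identifies $\{v^\ep\}$ as Cauchy in $L^1(\Om\times\mT^d\times(0,T))$; the limit $u$ inherits the kinetic formulation (\ref{kine1}) by passing to the limit in the approximate identity, with $m$ arising as the weak limit of the dissipation measures of the $S$-substeps plus any defect, and $m\geq n_1$ following from lower semicontinuity. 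Finally, continuity of orbits in $L^p(\mT^d)$ is recovered by testing the kinetic formulation against $\xi$-independent functions and applying Burkholder--Davis--Gundy to the stochastic integral, exactly as in \cite{DeHoVo}. The genuinely delicate point throughout is the $L^1$ estimate of the splitting commutator in the absence of BV bounds, which is what the partition of Section 3 is engineered to handle.
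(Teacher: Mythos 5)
Your overall architecture coincides with the paper's: an adaptive time-splitting between the stochastic flow $R$ and the deterministic degenerate parabolic flow $S$, finiteness of the partition proved separately, and a direct doubling-of-variables comparison of $v^\ep$ with $v^{\ep'}$ on the original probability space, avoiding the Skorokhod/Gy\"ongy--Krylov machinery. Two points, however, need correction. First, the role of the partition: it is not chosen to balance the $L^p$ growth of the stochastic substep against a ``contraction'' of the deterministic one (the uniform $L^p$ bounds hold for any partition of mesh at most $\ep$); it is chosen adaptively, via the stopping rule $t^\ep_{n+1}=\inf\{t:\ \mE\|S(t-t^\ep_n)\tilde u^\ep_n-\tilde u^\ep_n\|_{L^1}>\ep\}\wedge(t^\ep_n+\ep)\wedge T$, precisely because in the $L^1$ (non-BV) setting the semigroup $S$ has no uniform modulus of continuity in time, so only an adaptive partition guarantees $\mE\|\tilde v^\ep(t)-\tilde v^\ep(s)\|_{L^1}\le 2\ep$ on each subinterval; proving that such a partition reaches $T$ in finitely many steps is itself a nontrivial Cauchy-sequence/contradiction argument (Proposition \ref{longtime}).

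Second, and more seriously, your phrase ``treating the splitting commutator as a source term whose $L^1$ mass is controlled by the choice of partition'' passes over the one step that would actually fail as written. In the doubling of variables between $v^\ep$ and $v^{\ep'}$ the splitting error $f^\pm(v^\ep(s),\xi)-f^\pm(v^\ep(s^\ep),\xi)$ is integrated not only against Lebesgue measure in $(x,s)$, where the bound $\mE\|v^\ep(s)-v^\ep(s^\ep)\|_{L^1}\le C\ep^{1/2}$ suffices, but also against the random kinetic measure $m^{\ep'}$ of the other approximation (the term $J_2'$). Since $m^{\ep'}$ is a random measure, not absolutely continuous in $t$ and controlled only through $\mE|m^{\ep'}|^2$, one cannot simply multiply an expectation bound on the commutator by the expected mass of $m^{\ep'}$; the paper needs a genuinely delicate argument here (weak* limit measures $m^0_\ep$, an outer measure $m^0$, its absolute continuity with respect to $\nu=\mE m^0$, the derivative $D_\nu m^0$ and a Chebyshev truncation via Lemma \ref{lemma5}) to show that this pairing vanishes as $\ep,\ep'\downarrow 0$. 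Your sketch contains no mechanism for this. A minor further point: the improvement over \cite{DeHoVo} (dropping $r(\de)\le C\de^\al$) does not come from the contraction/uniqueness step, which \cite{DeHoVo} already establishes without that hypothesis; it comes from the new existence proof, and the paper accordingly quotes uniqueness and the a.s.\ continuity of trajectories from \cite{DeHoVo} rather than re-deriving them.
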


%%%%%%%%%%%%%%%%%%
%%% Remark 2.2 %%%
%%%%%%%%%%%%%%%%%%

\begin{rem}
 The uniqueness has been proved in \cite{DeHoVo} while the existence is obtained under the further additional assumption that for some $\al > 0$, $r(\de) \leq C \de^\al$, $\de < 1$. Namely we here obtain the existence under the same assumptions that are assumed in order to obtain the uniqueness.
\end{rem}

%%%%%%%%%%%%%%
%%% Indent %%%
%%%%%%%%%%%%%%

Let us now explain the construction and some properties of the approximate solutions. We consider the following two equations: for $0 \leq s < t \leq T$,
\begin{align}
\begin{cases}
dv = \Phi (v) d W (t) \\
v(\cdot ,s) = v_s (\cdot)
\end{cases} \label{R} %%%
\end{align}and
\begin{align}
\begin{cases}
\pa_t u + {\rm div} (B (u)) = {\rm div} (A (u) \nabla u) \\
u(\cdot, s) = u_s (\cdot).
\end{cases} \label{S} %%%
\end{align}Let $R(t,s)$ and $S(t-s)$ be the solution operators of (\ref{R}) and (\ref{S}), respectively. Namely we can write
\begin{gather*}
 v(t,s) = R (t,s)v_s \hs{4mm} \text{and} \hs{4mm} u(t,s) = S (t-s)u_s.
\end{gather*}

%%%%%%%%%%%%%%
%%% Indent %%%
%%%%%%%%%%%%%%

For the SDE (\ref{R}) we have

%%%%%%%%%%%%%%%%%
%%% Lemma 2.1 %%%
%%%%%%%%%%%%%%%%%

\begin{lemm} \label{lemmaSDE} %%%
 Let $v_s \in L^p (\Om ; \mathscr{F}_s, dP; L^p (\mT^d))$ for $p \geq 1$. There exists a unique kinetic solution $v(t,s)$ to (\ref{R}), which has a representative in $L^p (\Om; L^\infty (s,T; L^p (\mT^d)))$ with almost surely continuous trajectories in $L^p (\mT^d)$. Besides, it satisfies the following ``strong" kinetic formulation at all $t \in [s,T)$, that is, weak in $(x,t)$ only: $P$-a.s., for all $t \in [s,T)$, for all $\ph \in C_c^\infty (\mT^d \times \mR)$,
 \begin{align}
  & \hs{-5mm} - \int_{\mT^d} \int_\mR f^\pm (v(t,s), \xi) \ph \ d\xi dx + \int_{\mT^d} \int_\mR f^\pm (v_s, \xi) \ph \ d\xi dx  \nn \\
  & = - \sum_{k=1}^\infty \int_s^t \int_{\mT^d} g_k (x,v(t,s)) \ph (x,v(r,s)) \ dx d\beta_k (r) \nn \\
  & \hs{5mm} - \frac{1}{2} \int_s^t \int_{\mT^d} G^2 (x,v(r,s)) \pa_\xi \ph (x,v(r,s)) \ dx dr. \label{stochast}
 \end{align}
\end{lemm}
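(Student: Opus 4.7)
The equation (\ref{R}) is a special case of the stochastic conservation law studied in \cite{DeVo,DeVo2} with vanishing flux ($B \equiv 0$ and $A \equiv 0$); existence and uniqueness of a kinetic solution $v \in L^p(\Om; L^\infty(s,T; L^p(\mT^d)))$ with almost surely $L^p$-continuous trajectories therefore reduces to those references, together with the $L^p$-estimate $\mE \sup_{t \in [s,T]} \| v(t,s) \|_{L^p}^p \leq C_T\, \mE (1 + \| v_s \|_{L^p}^p)$ obtained by applying It\^o's formula to $\| \cdot \|_{L^p}^p$ and invoking (\ref{H3}) and the Burkholder--Davis--Gundy inequality. The genuinely new content is the \emph{strong} formulation (\ref{stochast}) which is pointwise in $t$. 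The plan is to exploit the fact that (\ref{R}) carries no spatial derivatives: for each $x \in \mT^d$ the trajectory $v(x, \cdot)$ solves the scalar It\^o SDE $dv = \sum_k g_k(x, v)\, d\beta_k$, so the desired identity comes from the scalar It\^o formula applied to a primitive of $f^+$ in $\xi$.

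Fix $\ph \in C_c^\infty(\mT^d \times \mR)$ and introduce the primitive
\begin{gather*}
\Psi(x, u) := \int_{-\infty}^u \ph(x, \xi) \ d\xi,
\end{gather*}
which is smooth in $u$ with $\pa_u \Psi = \ph$ and $\pa_u^2 \Psi = \pa_\xi \ph$, and satisfies $\int_\mR f^+(u, \xi) \ph(x, \xi) \ d\xi = \Psi(x, u)$. Applying scalar It\^o at each $x$ to $r \mapsto \Psi(x, v(r, s))$ yields
\begin{gather*}
\Psi(x, v(t,s)) - \Psi(x, v_s) = \sum_k \int_s^t \ph(x, v(r,s))\, g_k(x, v(r,s)) \ d\beta_k(r) + \frac{1}{2} \int_s^t \pa_\xi \ph(x, v(r,s))\, G^2(x, v(r,s)) \ dr.
\end{gather*}
Integrating over $\mT^d$ and invoking stochastic Fubini (justified by the linear-growth bound (\ref{H3}) together with the $L^p$-estimate on $v$) produces (\ref{stochast}) for $f^+$; the $f^-$ case is analogous.

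To promote the identity from validity at each fixed $t$ on a $t$-dependent null set to validity for all $t \in [s,T)$ simultaneously on a common null set, I pick a countable dense family $\{ \ph_j \} \subset C_c^\infty(\mT^d \times \mR)$, observe that (\ref{stochast}) holds for each $\ph_j$ off an exceptional set $N_j$ of probability zero, set $N = \bigcup_j N_j$, and on $\Om \setminus N$ recover (\ref{stochast}) for arbitrary $\ph \in C_c^\infty$ by density, using the almost-sure continuity of $v(\cdot, s): [s,T) \to L^p(\mT^d)$ and the pathwise continuity of the stochastic integrals on the right-hand side.

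The main technical obstacle is the rigorous justification of the fiberwise It\^o formula and the stochastic Fubini exchange under the non-Lipschitz hypothesis (\ref{H4}), where $\Phi$ is only continuous with modulus controlled by $u\, r(u)$. I handle this by a Lipschitz truncation $g_k^n \to g_k$ preserving the growth (\ref{H3}) uniformly in $n$: standard Hilbert-space SPDE theory produces classical $v^n$ for which both It\^o and Fubini are immediate, the identity (\ref{stochast}) then holds for each $v^n$, and one passes to the limit using the uniform $L^p$-bound and the strong convergence $v^n \to v$ in $L^p(\Om \times \mT^d \times [s,T])$ inherited from the existence argument.
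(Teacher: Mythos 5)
Your proposal is correct and reaches the same conclusion as the paper, but it organizes the key step differently. Both arguments rest on the same foundation: Lipschitz approximations of $\Phi$ preserving (\ref{H3})--(\ref{H4}), the Hilbert-space SDE theory of \cite{DaZa}, \cite{GaMa} for the approximate equations, It\^o-based $L^p$ estimates, and the uniqueness/reduction machinery of \cite{DeVo2} to obtain a representative with almost surely continuous trajectories in $L^p(\mT^d)$ (a fact you also need and correctly outsource to the references). Where you diverge is in how the ``strong'', pointwise-in-$t$ formulation (\ref{stochast}) is obtained. The paper first passes to the limit in the time-weak kinetic formulation, observes that the kinetic measure vanishes because the approximation carries no viscosity term, and then upgrades to the pointwise identity by showing that $t\mapsto f^\pm(v(t,s),\xi)$ is weakly-$*$ continuous in $L^\infty(\mT^d\times\mR)$ and choosing test functions concentrated in time as in \cite{DeVo2}. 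You instead derive the pointwise identity directly for the regularized solutions via the It\^o formula applied to the primitive $\Psi(x,u)=\int_{-\infty}^{u}\ph(x,\xi)\,d\xi$ --- which is precisely the computation underlying the kinetic formulation --- and then pass to the limit; this is legitimate and arguably more transparent, since (\ref{R}) has no spatial operator. The one delicate point is the limit passage at a \emph{fixed} time: strong convergence of the approximations in $L^p(\Om\times\mT^d\times[s,T])$ only yields (\ref{stochast}) for a.e.\ $t$, so you must first obtain the identity on a full-measure set of times and then extend to all $t\in[s,T)$ using the $L^1$-continuity of $t\mapsto v(t,s)$, the elementary bound $\big|\int_\mR(f^\pm(u,\xi)-f^\pm(u',\xi))\ph\,d\xi\big|\le\|\ph\|_{\infty}|u-u'|$, and the pathwise continuity of the stochastic integral. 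Your final paragraph supplies these ingredients but conflates the $\omega$-exceptional sets with the $t$-exceptional set; once that step is stated carefully, the argument is complete and equivalent in strength to the paper's.
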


%%%%%%%%%%%%%%%%%%%%%%%%%%%%%%
%%% The proof of Lemma 2.1 %%%
%%%%%%%%%%%%%%%%%%%%%%%%%%%%%%

\begin{proof}
 Let us recall the existence proof of \cite[Section 4]{DeVo2}, and so $v_s^\eta$ is a smooth approximation of $v_s$, $\Phi_\eta$ is a suitable Lipschitz approximation of $\Phi$ satisfying (\ref{H3}), (\ref{H4}) uniformly, $g^\eta_k$ and $G^\eta$ are defined as in the case $\eta = 0$. Moreover, we may assume that $g^\eta_k \in C_c^\infty (\mT^d \times \mR)$ and $g^\eta_k = 0$ for $k \geq 1/\eta$.
 
%%%%%%%%%%%%%%
%%% Indent %%%
%%%%%%%%%%%%%%

In the case of no flux one can take the following approximations instead of parabolic approximations:
\begin{align*}
\begin{split}
  & dv^\eta =  \Phi_\eta (v^\eta) \ dW (t), \\
  & v^\eta (x,0) =  v^\eta_s (x),
\end{split} \hs{-15mm}
\begin{split}
  & (x,t) \in \mT^d \times (s,T), \\
  & x \in \mT^d.
\end{split}
\end{align*}It is shown in \cite[Theorem 7.4]{DaZa} (also see \cite[Theorem 3.15]{GaMa}) that the above SDE has a unique $L^2 (\mT^d)$ valued continuous mild solution $v^\eta$. Thanks to \cite[Theorem 3.2]{GaMa}, $v^\eta$ is indeed a strong solution. Moreover, it is also shown in \cite{DaZa} and \cite{GaMa} that using the It\^o formula and the Gronwall lemma one can obtain
\begin{gather*}
 \mE \| v^\eta (t) \|^2_{L^2 (\mT^d)} \leq C(T) \big( 1 + \mE \| v^\eta_s \|^2_{L^2 (\mT^d)}  \big).
\end{gather*}Also, for $p \geq 2$, by the It\^o formula applied to $|v|^p$ and a martingale inequality
\begin{gather*}
 \mE \big( \sup_{ t \in [s,T)} \| v^\eta \|^p_{L^p (\mT^d)} \big) \leq C(p, v^\eta_s, T).
\end{gather*}From now on we can proceed with the proof by the same manner as in \cite[Section 4]{DeVo2}. However, we note that in this case the kinetic measure $m^\eta = \eta | \nabla v^\eta |^2 \de_{v^\eta = \xi}$ disappears owing to no viscosity $\eta \Delta v^\eta$ in the approximate equation; and besides, due to the fact that $v^\eta$ is sufficiently regular one sees that it satisfies the corresponding kinetic formulation without any measure. Therefore there exists a kinetic solution $v(t,s)$ to (\ref{R}) with the kinetic measure $m = 0$. Furthermore, by virtue of the uniqueness and reduction theorem (\cite[Theorem 15, Corollary 16]{DeVo2}) we see that $v(t,s)$ has a representative in $L^p (\Om; L^\infty (s,T; L^p (\mT^d)))$ with almost surely continuous trajectories in $L^p (\mT^d)$. In particular, we can regard $v(\cdot, s)$ as a function of $C([s,T); L^1(\mT^d))$ $P$-a.s.. Let $t \in [s,T)$ and let $\{ t_j \}$ be a sequence in $[s,T)$ such that $\lim_{j \to \infty} t_j = t$ and $P$-a.s., $\lim_{j \to \infty} v(t_j,s) = v(t,s)$ a.e. $x \in \mT^d$. From the first part of the proof of Theorem 6.4 in \cite{DeHoVo}, we find that $P$-a.s., $\lim_{j \to \infty} f^\pm (v(t_j, s), \xi) = f^\pm (v(t,s), \xi)$ weakly-$*$ in $L^\infty (\mT^d \times \mR)$. This inplies that $f(v(\cdot, s),\xi)$ is weakly-$*$ continuous in $L^\infty (\mT^d \times \mR)$, $P$-a.s.. Hence the strong kinetic formulation at every $t \in [s,T)$ follows from the kinetic formulation (7) as in \cite{DeVo2} which has no flux and no kinetic measure by taking appropriate test functions $\ph$ (see \cite[equation (22)]{DeVo2}).

\end{proof}

%%%%%%%%%%%%%%%%%
%%% Lemma 2.2 %%%
%%%%%%%%%%%%%%%%%

\begin{lemm} \label{lem} %%%
 For $p \geq 2$,
 \begin{gather}
  \mE \| v (t,s) \|_{L^p (\mT^d)}^p \leq e^{K (t-s)} \left(\mE \| v_s \|_{L^p (\mT^d)}^p + K_T (t-s) \right),
 \end{gather}where $K$ is a constant depending on $p$ and $C$ appearing in (\ref{H3}) and $K_T$ a constant depending on $T$ as well.
\end{lemm}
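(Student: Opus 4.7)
The plan is to apply It\^o's formula to $|v|^p$ (which is $C^2$ since $p\ge 2$) pointwise in $x$, integrate over $\mT^d$, and take expectation. Because $v$ itself may lack the regularity required for a direct application of It\^o's formula, I would carry out the computation on the smooth approximations $v^\eta$ constructed in the proof of Lemma~\ref{lemmaSDE}, for which the bound $\mE\sup_{t\in[s,T)}\|v^\eta\|_{L^p(\mT^d)}^p<\infty$ is already established, and then pass to the limit $\eta\to 0$.

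After taking expectation, the stochastic integral vanishes (thanks to the aforementioned moment bound on $v^\eta$) and the It\^o correction delivers
\begin{equation*}
\mE\|v^\eta(t,s)\|_{L^p(\mT^d)}^p = \mE\|v_s^\eta\|_{L^p(\mT^d)}^p + \frac{p(p-1)}{2}\,\mE\int_s^t\!\int_{\mT^d}|v^\eta|^{p-2}\,(G^\eta)^2(x,v^\eta)\,dx\,dr.
\end{equation*}
Using the growth bound $(G^\eta)^2(x,\xi)\le C(1+|\xi|^2)$ from (H$_3$) (which holds uniformly in $\eta$) together with Young's inequality $|v^\eta|^{p-2}\le 1+|v^\eta|^p$, the correction term is dominated by $K\int_s^t \mE\|v^\eta(r,s)\|_{L^p(\mT^d)}^p\,dr + K_T(t-s)$ for constants $K=K(p,C)$ and $K_T=K_T(p,C,T)$. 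Gronwall's lemma, applied to $h(r):=\mE\|v^\eta(r,s)\|_{L^p(\mT^d)}^p$, then yields the stated estimate for $v^\eta$; strong $L^p$-convergence $v^\eta\to v$ (together with convergence of $v_s^\eta$ to $v_s$) plus Fatou's lemma transfers the bound to $v$.

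The main obstacle is regulatory rather than algebraic: making sense of It\^o's formula for the (a priori only $L^p$-valued) solution $v$ is precisely what forces the detour through the regularizations $v^\eta$ from Lemma~\ref{lemmaSDE}. Once that detour is in place, every other step is a standard linear Gronwall manipulation; the clean form of the bound stated in the lemma follows from the elementary reorganization $h(t)+K_T/K\le(h(s)+K_T/K)e^{K(t-s)}$, after which the constants on the right-hand side may be absorbed into a single $K_T$ to match the claimed form.
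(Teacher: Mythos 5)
Your proposal is correct and ends in exactly the same computation as the paper: the It\^o identity for $\|v(t)\|_{L^p(\mT^d)}^p$, expectation killing the martingale term, the growth bound (H$_3$) together with $|v|^{p-2}(1+|v|^2)\le 1+2|v|^p$, and Gronwall. The only genuine difference is how you justify the It\^o identity. You go back to the Lipschitz regularizations $v^\eta$ from the proof of Lemma~\ref{lemmaSDE}, where It\^o's formula applies classically, and then pass to the limit $\eta\to 0$ with Fatou; this works, but it obliges you to invoke (or re-establish) the strong $L^p$-convergence $v^\eta\to v$, which in this paper is only available through the reduction/uniqueness machinery of Debussche--Vovelle. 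The paper instead exploits the fact that Lemma~\ref{lemmaSDE} already delivers a \emph{strong} kinetic formulation (\ref{stochast}) for the limit $v$ itself, valid at every $t$, and simply tests it against the truncations $\ph_n^\pm(\xi)=p|\xi|^{p-1}(\xi)^\pm\Psi_n(\xi)$ of the non-compactly-supported function $p|\xi|^{p-2}\xi$, letting $n\to\infty$ using the $L^p$-moment bounds and summing the $f^+$ and $f^-$ identities. That route is slightly more economical because the approximation-and-limit work has already been paid for once in Lemma~\ref{lemmaSDE}, and it avoids any further discussion of the mode of convergence of $v^\eta$; your route is self-contained at the price of repeating that limit passage.
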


%%%%%%%%%%%%%%%%%%%%%%%%%%%%%%
%%% The proof of Lemma 2.2 %%%
%%%%%%%%%%%%%%%%%%%%%%%%%%%%%%

\begin{proof}
 It suffices to show the estimate for $s=0$, and write $v(t) = v(t,0)$. Take $\ph_n^\pm (\xi) = p |\xi|^{p-1} (\xi)^\pm \Psi_n (\xi)$ as a test function in (\ref{stochast}), where $\Psi_n$ is a cut off function on $\mR$ defined by $\Psi_n (\xi) = 1$ if $|\xi| \leq n$, $\Psi_n (\xi) = 0$ if $|\xi| \geq 2n$ and $| \Psi'_n (\xi) | \leq \frac{C}{n}$. Letting $n \to \infty$, summing (\ref{stochast}) for $f^+$ and for $f^-$, we have
 \begin{align*}
  & \| v (t) \|^p_{L^p (\mT^d)} = \| v (0) \|^p_{L^p (\mT^d)} + p \int_0^t \int_{\mT^d} | v (s) |^{p-2} v(s) \Phi (v(s)) \ dx d\beta_k (s) \\
  & \hs{23mm} + \frac{1}{2} p (p-1) \int_0^t \int_{\mT^d} | v (s) |^{p-2} \sum_{k=1}^\infty | g_k (x,v) |^2 \ dxds,
 \end{align*}for almost surely, for all $t \in [0,T)$. Taking expectation and using (\ref{H3}) we have
 \begin{gather*}
  \mE \| v(t) \|_{L^p (\mT^d)}^p \leq \mE \| v (0) \|_{L^p (\mT^d)}^p + Kt + K \int_0^t \mE \| v (s) \|_{L^p (\mT^d)}^p \ ds,
 \end{gather*}which, together with the Gronwall inequality, yields the desired estimate.
\end{proof}

%%%%%%%%%%%%%%
%%% Indent %%%
%%%%%%%%%%%%%%

On the other hand, Chen and Perthame \cite{ChPe} has already proved the well-posedness of the deterministic anisotropic degenerate parabolic equation (\ref{S}): For each $u_s (\cdot) \in L^1 (\mT^d)$ there exists a unique kinetic solution $u (t,s) \in C ([s,T); L^1 (\mT^d))$ in the sense of the deterministic version of Definition 2.2 (see \cite[Definition 2.2]{ChPe} and Remark \ref{rem2.3} below). Besides we have for all $t \in [s,T)$ and $p \in [1,\infty]$,
\begin{gather}
 \| u_1 (t) - u_2 (t) \|_{L^1 (\mT^d)} \leq \|  u_1 (s) - u_2 (s) \|_{L^1 (\mT^d)}, \label{contraction} \\
 \| u_1 (t) \|_{L^p (\mT^d)} \leq \| u_1 (s) \|_{L^p (\mT^d)}, \label{Lp} %%%
\end{gather}where $u_i$, $i = 1,2$, are arbitrary kinetic solutions to (\ref{S}).

%%%%%%%%%%%%%%
%%% Indent %%%
%%%%%%%%%%%%%%

 To prove our existence theorem we propose to approximate the equations (\ref{dp1})-(\ref{dp2}) as follows. Let $\ep > 0$ and let $t_0^\ep = 0$, $\tilde{u}_{0}^\ep = u_0$. For $n \in \mN \cup \{ 0 \}$, if $t_n^\ep < T$, define
\begin{align*}
& t^\ep_{n+1} := \inf \{ t> t^\ep_n ; \ \mE \| S(t - t^\ep_n) \tilde{u}^\ep_{n} - \tilde{u}^\ep_{n} \|_{L^1 (\mT^d)} > \ep \} \wedge ( {t^\ep_n + \ep} ) \wedge T, \\
& u_n^\ep := S(t_{n+1}^\ep - t_n^\ep ) \tilde{u}_{n}^\ep, \hs{4mm} \tilde{u}_{n+1}^\ep := R(t_{n+1}^\ep, t_n^\ep) u_n^\ep;
\end{align*}if $t_n^\ep = T$, define $t_{n+1}^\ep = T$ where $a \wedge b = \min \{a,b \}$. Then define the approximate solutions $v^\ep$ and $\tilde{v}^\ep$ by
\begin{align*}
& v^\ep (t) := R(t,t_n^\ep) u_n^\ep \hs{4mm} \text{for} \ t \in [t_n^\ep, t_{n+1}^\ep ) \hs{4mm} \text{a.s.,} \\
& \tilde{v}^\ep (t) := S(t-t^\ep_n) \tilde{u}^\ep_{n} \hs{4mm} \text{for} \ t \in [t_n^\ep, t_{n+1}^\ep ) \hs{4mm} \text{a.s.}
\end{align*} Set $T^\ep = \sup_{n \geq 1} t^\ep_n$. Obviously, $v^\ep$ and $\tilde{v}^\ep$ are the functions defined on $[0,T^\ep)$ such that
\begin{align*}
 & v^\ep (x,t_n^\ep) = u^\ep_n (x), \hs{4mm} v^\ep (x,t_{n+1}^\ep -0) = \tilde{u}^\ep_{n+1} (x), \\
 & \tilde{v}^\ep (x,t_n^\ep) = \tilde{u}^\ep_n (x), \hs{4mm} \tilde{v}^\ep (x, t_{n+1}^\ep - 0) = u_{n}^\ep (x).
\end{align*}By virtue of Lemma \ref{lem} and (\ref{Lp}) $v^\ep$ and $\tilde{v}^\ep$ satisfy the following estimates, respectively: for any $p \geq 1$ there exists a constant $C$ depending on $p$, on the terminal $T$ and on the initial condition $u_0$ but not on $\ep$ such that for all $t \in [0,T^\ep )$, 
\begin{gather}
 \mE \| v^\ep (t) \|_{L^p (\mT^d)}^p \leq C, \label{estimate01} %%%
  \\
 \mE \| \tilde{v}^\ep (t) \|_{L^p (\mT^d)}^p \leq C. \label{estimate02} %%%
\end{gather}Indeed, in the case of $v^\ep$, we can calculate as follows. For any $t \in [0,T^\ep )$ there exists a interval $[ t^\ep_n, t^\ep_{n+1} )$ which includes $t$. Then using Lemma \ref{lem} and (\ref{Lp}) repeatedly, we have
\begin{align*}
 & \hs{-5mm} \mE \| v^\ep (t) \|^p_{L^p (\mT^d)} \\
 & \leq e^{K (t^\ep_{n+1} - t^\ep_n)} \mE \| u^\ep_n \|_{L^p (\mT^d)}^p + e^{K (t^\ep_{n+1} - t^\ep_n)} K_T (t^\ep_{n+1} - t^\ep_n) \\
 & \leq e^{K (t^\ep_{n+1} - t^\ep_n)} \mE \| \tilde{u}^\ep_n \|_{L^p (\mT^d)}^p + e^{K (t^\ep_{n+1} - t^\ep_n)} K_T (t^\ep_{n+1} - t^\ep_n) \\
 & \leq e^{K (t^\ep_{n+1} - t^\ep_{n-1})} \mE \| u^\ep_{n-1} \|_{L^p (\mT^d)}^p \\
 & \hs{5mm} + e^{K (t^\ep_{n+1} - t^\ep_{n-1})} K_T (t^\ep_{n+1} - t^\ep_{n-1}) + e^{K (t^\ep_{n+1} - t^\ep_{n})} K_T (t^\ep_{n+1} - t^\ep_{n}) \\
 & \leq \cdots \\
 & \leq e^{K t^\ep_{n+1}} \mE \| u_0 \|_{L^p (\mT^d)}^p + \sum_{k=0}^n e^{K (t^\ep_{n+1} - t^\ep_k)} K_T (t^\ep_{k+1} - t^\ep_{k}) \\
 & \leq e^{KT} \left( \mE \| u_0 \|_{L^p (\mT^d)}^p + K_T T \right).
\end{align*}Thus we obtain the estimate (\ref{estimate01}). The estimate (\ref{estimate02}) will be proved in the same fashion.

%%%%%%%%%%%%%%
%%% Indent %%%
%%%%%%%%%%%%%%

We now derive the kinetic formulation satisfied by the approximate solutions $v^\ep$, $\tilde{v}^\ep$. Let $\ph \in C_c^\infty (\mT^d \times \mR)$. $v^\ep$ satisfies the strong kinetic formulation at every $t \in [t^\ep_n, t^\ep_{n+1})$ by Lemma \ref{lemmaSDE}: $P$-a.s., for all $t \in [t^\ep_n, t^\ep_{n+1})$, 
\begin{align}
& \hs{-7mm} -\int_{\mT^d} \int_\mR f^\pm (v^\ep(t), \xi) \ph \ d\xi dx + \int_{\mT^d} \int_{\mR} f^\pm (u_n^\ep, \xi) \ph \ d\xi dx \nn \\
& = - \sum_{k=1}^\infty \int_{t_n^\ep}^t \int_{\mT^d} g_k (x,v^\ep) \ph(x,v^\ep) \ dx d\beta_k (s) \nn \\
& \hs{7mm} - \frac{1}{2} \int_{t^\ep_n}^t \int_{\mT^d} G^2 (x,v^\ep) \pa_\xi \ph (x,v^\ep) \ dx ds. \label{a1} %%%
\end{align}On the other hand, note that $\tilde{v}^\ep \in C ([t^\ep_n, t^\ep_{n+1}); L^1 (\mT^d))$ as stated after the proof of Lemma \ref{lem}. Hence $\tilde{v}^\ep$ satisfies the strong kinetic formulation at every $t \in [t^\ep_n, t^\ep_{n+1})$ (see Remark \ref{rem2.3} below):
\begin{align}
& -\int_{\mT^d} \int_\mR f^\pm (\tilde{v}^\ep(t), \xi) \ph \ d\xi dx + \int_{\mT^d} \int_{\mR} f^\pm (\tilde{u}_n^\ep, \xi) \ph \ d\xi dx \nn \\
& \hs{7mm} + \int_{t_n^\ep}^t \int_{\mT^d} \int_\mR f^\pm (\tilde{v}^\ep (s), \xi) (b(\xi) \cdot \nabla + A(\xi): D^2) \ph \ d\xi dx ds \nn \\
& = \int_{\mT^d \times [t^\ep_n, t] \times \mR} \pa_\xi \ph \ dm_n^\ep, \label{a2} %%%
\end{align}$P$-a.s., for all $t \in [t_n^\ep , t_{n+1}^\ep)$, where $m^\ep_n$ is the associated entropy dissipation measure on $\mT^d \times [t_n^\ep, t_{n+1}^\ep) \times \mR$, a.s. such that $m^\ep_n \geq n^\ep_{1,n}$ and
\begin{gather*}
 \lim_{R \to \infty} m^\ep_n (\mT^d \times [t_n^\ep, t_{n+1}^\ep) \times \{ \xi \in \mR; R \leq |  \xi | \}) = 0, \hs{3mm} \text{a.s.},
\end{gather*}and $n^\ep_{1,n}$ is the parabolic dissipation measure on $\mT^d \times [t^\ep_n, t^\ep_{n+1}) \times \mR$ which is defined by (\ref{measure}) with $u$ replaced by $\tilde{v}^\ep$. Letting $t \uparrow t^\ep_{n+1}$ in the kinetic formulations (\ref{a1}) and (\ref{a2}), we have
\begin{align}
& \hs{-7mm} -\int_{\mT^d} \int_\mR f^\pm (\tilde{u}^\ep_{n+1}, \xi) \ph \ d\xi dx + \int_{\mT^d} \int_{\mR} f^\pm (u_n^\ep, \xi) \ph \ d\xi dx \nn \\
& = - \sum_{k=1}^\infty \int_{t_n^\ep}^{t^\ep_{n+1}} \int_{\mT^d} g_k (x,v^\ep) \ph(x,v^\ep) \ dx d\beta_k (s) \nn \\
& \hs{7mm} - \frac{1}{2} \int_{t^\ep_n}^{t^\ep_{n+1}} \int_{\mT^d} G^2 (x,v^\ep) \pa_\xi \ph (x,v^\ep) \ dx ds, \label{b1} %%%
\end{align}and
\begin{align}
& -\int_{\mT^d} \int_\mR f^\pm (\tilde{u}^\ep_n, \xi) \ph \ d\xi dx + \int_{\mT^d} \int_{\mR} f^\pm (\tilde{u}_n^\ep, \xi) \ph \ d\xi dx \nn \\
& \hs{7mm} + \int_{t_n^\ep}^{t^\ep_{n+1}} \int_{\mT^d} \int_\mR f^\pm (\tilde{v}^\ep (s), \xi) (b(\xi) \cdot \nabla + A(\xi): D^2) \ph \ d\xi dx ds \nn \\
& = \int_{\mT^d \times [t^\ep_n, t^\ep_{n+1}) \times \mR} \pa_\xi \ph \ dm_n^\ep. \label{b2} %%%
\end{align}Now take any $t \in [0,T^\ep )$. Then there exists an interval $[t^\ep_n, t^\ep_{n+1})$ which includes $t$. Summing (\ref{b1}), (\ref{b2}) over $0$ to $n-1$ and then summing (\ref{a1}), (\ref{a2}) for $t \in [t^\ep_n, t^\ep_{n+1})$, we have
\begin{align}
& - \int_{\mT^d} \int_\mR f^\pm (v^\ep (t), \xi) \ph \ d\xi dx - \int_{\mT^d} \int_\mR f^\pm ( \tilde{v}^\ep (t), \xi) \ph \ d\xi dx \nn \\
& \hs{7mm} + \int_{\mT^d} \int_\mR f^\pm ( {v}^\ep (t^\ep), \xi) \ph \ d\xi dx + \int_{\mT^d} \int_\mR f^\pm (u_0, \xi) \ph \ d\xi dx \nn \\
& \hs{7mm} + \int_0^t \int_{\mT^d} \int_{\mR} f^\pm (\tilde{v}^\ep (s), \xi) (b (\xi) \cdot \nabla + A (\xi):D^2 ) \ph \ d\xi dx ds \nn \\
& = - \sum_{k=1}^\infty \int_0^t \int_{\mT^d} g_k (x,v^\ep) \ph (x,v^\ep) \ dx d\beta_k (s) \nn \\
& \hs{7mm} -\frac{1}{2} \int_0^t \int_{\mT^d} G^2 (x,v^\ep) \pa_\xi \ph (x,v^\ep) \ dx ds + \int_{[0,t] \times \mT^d \times \mR} \pa_\xi \ph \ dm^\ep, \label{kine} %%%
\end{align}a.s., for $\ph \in C^\infty_c (\mT^d \times \mR)$ and $t \in [0,T^\ep)$, where we have used the notations that $m^\ep = \sum_{n=0}^{\infty} m^\ep_n$ and $t^\ep = t^\ep_{k}$ if $t \in [t^\ep_k, t^\ep_{k+1})$, $k \in \mN \cup \{ 0 \}$. We note that $m^\ep$ is almost surely finite by virtue of the following lemma.

%%%%%%%%%%%%%%%%%
%%% lemma 2.3 %%%
%%%%%%%%%%%%%%%%%

\begin{lemm} \label{lemma2}
 For any $p \geq 0$, there exists a constant $C = C(p,u_0,T) \geq 0$ such that for all $\ep \in (0,1)$ the measure $m^\ep$ satisfies the following estimate:
 \begin{gather}
  \mE \int_{\mT^d \times [0,T^\ep) \times \mR} |\xi|^p \ d m^\ep (x,t,\xi) \leq C. \label{Lpbound} %%%
 \end{gather}
\end{lemm}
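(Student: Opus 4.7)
The key observation is that if $\varphi$ depends only on $\xi$, then $(b(\xi)\cdot\nabla + A(\xi):D^2)\varphi \equiv 0$, so testing (\ref{kine}) against such a $\varphi$ eliminates the transport-diffusion term and isolates the kinetic measure. For $R > 1$ and $L > R+1$ I construct $\varphi_{R,L}(\xi) := \varphi_R(\xi)\theta_L(\xi) \in C_c^\infty(\mR)$ (up to routine mollification of $|s|^p$ near $0$) via
\[
\varphi_R(\xi) := \int_0^\xi |s|^p \chi_R(s) \, ds,
\]
where $\chi_R$ and $\theta_L$ are even $C_c^\infty$ cutoffs with $\chi_R \equiv 1$ on $[-R,R]$, $\mathrm{supp}\,\chi_R \subset [-R-1,R+1]$, $\theta_L \equiv 1$ on $[-L,L]$, $\mathrm{supp}\,\theta_L \subset [-2L,2L]$, and $|\theta_L'| \leq C/L$. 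Then $\varphi_R$ is odd, nondecreasing, and bounded; moreover $\partial_\xi \varphi_{R,L}(\xi) = |\xi|^p \chi_R(\xi) + \varphi_R(\xi)\theta_L'(\xi)$, and the error summand is supported on $\{L \leq |\xi| \leq 2L\}$ and satisfies $\varphi_R \theta_L' \leq 0$ there (since $\varphi_R$ and $\theta_L'$ carry opposite signs on each side of $0$). By positivity of $m^\ep$,
\[
\int_{[0,t]\times\mT^d\times\mR} |\xi|^p \chi_R \, dm^\ep \;\leq\; \int_{[0,t]\times\mT^d\times\mR} \partial_\xi \varphi_{R,L} \, dm^\ep.
\]

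\textbf{Evaluating the right-hand side via (\ref{kine}).} I substitute $\varphi(x,\xi) = \varphi_{R,L}(\xi)$ into (\ref{kine}), rearrange to isolate the kinetic-measure integral, and take expectation. The stochastic integral is a genuine $L^2$-martingale (hence mean zero), because $\varphi_{R,L}$ is bounded and $\mE\int_0^t\int G^2(x,v^\ep)\,dx\,ds < \infty$ by (\ref{H3}) and (\ref{estimate01}). Writing $F(u) := \int_\mR f^\pm(u,\xi)\,\varphi_{R,L}(\xi)\,d\xi$, the identity becomes
\[
\mE \int \partial_\xi \varphi_{R,L} \, dm^\ep = \mE \int_{\mT^d} \bigl[-F(v^\ep(t)) - F(\tilde{v}^\ep(t)) + F(v^\ep(t^\ep)) + F(u_0)\bigr]\,dx + \tfrac{1}{2}\mE \int_0^t\!\!\!\int_{\mT^d} G^2(x,v^\ep)\,\partial_\xi \varphi_{R,L}(v^\ep)\,dx\,ds.
\]
Since $\varphi_{R,L}$ is odd, $F(u) = \int_0^u \varphi_{R,L}(\xi)\,d\xi + K$ with a constant $K$ independent of $u$; this $K$ cancels in the symmetric boundary combination, and the remaining pieces $\int_0^u \varphi_{R,L}$ are pointwise dominated by $|u|^{p+2}/((p+1)(p+2))$ uniformly in $R,L$, using $|\varphi_R(s)| \leq |s|^{p+1}/(p+1)$.

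\textbf{Bounding each term and passing to the limit.} Each boundary term is therefore controlled in expectation by the $L^{p+2}$-estimates (\ref{estimate01})--(\ref{estimate02}) at the relevant times, together with the hypothesis $u_0 \in L^{p+2}(\Omega; L^{p+2}(\mT^d))$, producing a bound $C(p,u_0,T)$. The It\^o correction splits along $\partial_\xi \varphi_{R,L} = |\xi|^p \chi_R + \varphi_R \theta_L'$: the main piece is dominated by $C\,\mE \int_0^T\!\!\int_{\mT^d}(1+|v^\ep|^2)|v^\ep|^p\,dx\,ds \leq C(p,u_0,T)$ by (\ref{H3}) and (\ref{estimate01}) at exponent $p+2$, while the $\theta_L'$-remainder is controlled by a constant depending on $R$ divided by $L$, hence vanishes as $L \to \infty$. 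Letting $L \to \infty$, then $t \uparrow T^\ep$, then $R \to \infty$ (monotone convergence on $|\xi|^p \chi_R \uparrow |\xi|^p$) establishes (\ref{Lpbound}).

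\textbf{Main obstacle.} The principal technical delicacy is the construction of $\varphi_{R,L}$: a $\varphi_R$ that bounds $m^\ep$ from below by $|\xi|^p$ on $[-R,R]$ must be monotone, hence cannot itself be compactly supported, forcing the auxiliary cutoff $\theta_L$. The crucial observation that the error term $\varphi_R\theta_L'$ has the \emph{favorable} sign against the positive measure $m^\ep$ --- and is $O(L^{-1})$ within the It\^o correction --- is what permits the limit $L \to \infty$ to be taken without any a priori control on $m^\ep$ outside $[-R,R]$, which is precisely the content of (\ref{Lpbound}).
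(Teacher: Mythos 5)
Your overall strategy is the same as the paper's: test the kinetic formulation with a function of $\xi$ alone so that the transport--diffusion term drops out, take expectation to kill the martingale, and control the boundary and It\^o terms by the $L^{p+2}$ estimates (\ref{estimate01})--(\ref{estimate02}); the paper simply inserts $(x,\xi)\mapsto\frac{1}{p+1}|\xi|^p\xi$ into its $\chi$-formulation (\ref{kinetic}) and reads off (\ref{calc2}). However, your explicit truncation step contains a sign error that breaks the argument. With your conventions, $\varphi_R$ is odd and nonnegative on $(0,\infty)$ while $\theta_L'\le 0$ on $(0,\infty)$ and $\theta_L'\ge 0$ on $(-\infty,0)$, so indeed $\varphi_R\theta_L'\le 0$ everywhere; but then
\begin{gather*}
\pa_\xi\varphi_{R,L}=|\xi|^p\chi_R+\varphi_R\theta_L'\;\le\;|\xi|^p\chi_R ,
\end{gather*}
and positivity of $m^\ep$ gives $\int\pa_\xi\varphi_{R,L}\,dm^\ep\le\int|\xi|^p\chi_R\,dm^\ep$ --- the \emph{reverse} of the inequality you display. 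This is not a fixable typo in the choice of cutoff: any $\varphi\in C_c^\infty(\mR)$ with $\pa_\xi\varphi\ge|\xi|^p\chi_R\ge 0$ would be nondecreasing and nonconstant, contradicting compact support. So the deficit $\int|\varphi_R\theta_L'|\,dm^\ep$, supported on $\{L\le|\xi|\le 2L\}$, must be \emph{added} to $\int\pa_\xi\varphi_{R,L}\,dm^\ep$ to recover $\int|\xi|^p\chi_R\,dm^\ep$, and your claim that the limit $L\to\infty$ can be taken ``without any a priori control on $m^\ep$ outside $[-R,R]$'' is exactly what fails: the bound $|\varphi_R\theta_L'|\le C(R)/L$ only helps if one already knows $m^\ep(\mT^d\times[0,t]\times\{|\xi|\ge L\})$ is finite (circular for $p=0$) or tends to $0$.

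The repair is to use the decay of the kinetic measures at infinity, which you explicitly disclaim needing. Each $m^\ep_n$ coming from the deterministic problem (\ref{S}) satisfies, almost surely, $m^\ep_n(\mT^d\times[t^\ep_n,t^\ep_{n+1})\times\{|\xi|\ge L\})\to 0$ as $L\to\infty$ (this is stated right after (\ref{a2})); applying your argument interval by interval, or to finite partial sums $\sum_{n\le N}m^\ep_n$, the truncation error $\int|\varphi_R\theta_L'|\,dm^\ep_n\le C(R)\,m^\ep_n(\{|\xi|\ge L\})$ vanishes as $L\to\infty$, and one then sums over $n$ and concludes by monotone convergence in $R$ and Fatou as $t\uparrow T^\ep$, exactly as in (\ref{calc2}). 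Everything else in your write-up --- the vanishing of the flux term, the cancellation of the additive constants in $F$, the domination $|\int_0^u\varphi_{R,L}|\le|u|^{p+2}/((p+1)(p+2))$, the treatment of the martingale and of the It\^o correction --- is correct and matches the paper's computation.
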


%%%%%%%%%%%%%%%%%%%%%%%%%%%%%%
%%% The proof of lemma 2.3 %%%
%%%%%%%%%%%%%%%%%%%%%%%%%%%%%%

\begin{proof}
 From (\ref{a1}) and (\ref{a2}), we deduce the following type of a kinetic formulation:
\begin{align}
 & - \int_{\mT^d} \int_\mR \chi (v^\ep (t), \xi) \ph \ d\xi dx + \int_{\mT^d} \int_\mR \chi (u_0, \xi) \ph \ d\xi dx \nn \\
 & \hs{7mm} + \int_0^{t^\ep} \int_{\mT^d} \int_\mR \chi (\tilde{v}^\ep, \xi) (b(\xi) \cdot \nabla + A (\xi) : D^2) \ph \ d\xi dx ds \nn \\
 & = - \sum_{k=1}^\infty \int_0^t \int_{\mT^d} g_k (x, v^\ep) \ph (x,v^\ep) \ dx d\beta_k (s) \nn \\
 & \hs{7mm} - \frac{1}{2} \int_0^t \int_{\mT^d} G^2 (x,v^\ep) \pa_\xi \ph (x,v^\ep) \ dx ds + \int_{\mT^d \times [0,t^\ep] \times \mR} \pa_\xi \ph \ dm^\ep, \label{kinetic} %%%
\end{align}a.s., for $\ph \in C^\infty_c (\mT^d \times \mR)$ and $t \in [0,T^\ep)$, where $\chi$ is an equilibrium function, i.e., $\chi (w,\xi) = 1$ if $0 < \xi < w$, $\chi (w,\xi) = -1$ if $w < \xi < 0$ and $\chi (w,\xi) = 0$ otherwise. Since for any $p \geq 0$, $v^\ep (t)$ and $\tilde{v}^\ep (t)$ are functions of $L^{p+2} (\mT^d)$ a.s. by virtue of (\ref{estimate01}) and (\ref{estimate02}), we can take $(x,\xi) \mapsto \frac{1}{p+1} |\xi|^{p} \xi$ as a test function. We then get
 \begin{align}
  & \int_{\mT^d \times [0,t^\ep] \times \mR} |\xi|^p \ dm^\ep (x,t,\xi) \nn \\
  & \hs{4mm} = - \frac{1}{(p+1)(p+2)} \| v^\ep (t) \|_{L^{p+2} (\mT^d)}^{p+2} + \frac{1}{(p+1)(p+2)} \| u_0 \|_{L^{p+2} (\mT^d)}^{p+2} \nn \\
  & \hs{8mm} + \frac{1}{p+1} \sum_{k=1}^\infty \int_0^t \int_{\mT^d} g_k (x,v^\ep (s)) \left| v^\ep (s) \right|^p v^\ep (s) \ dx d\beta_k (s) \nn \\
  & \hs{8mm} + \frac{1}{2} \int_0^t \int_{\mT^d} G^2 (x,v^\ep (s)) \left| v^\ep (s) \right|^p \ dx ds. \label{calc2} %%%
 \end{align}Taking expectation, from the assumption (\ref{H3}) and the estimate (\ref{estimate01}), we deduce
 \begin{gather*}
  \mE \int_{\mT^d \times [0,t^\ep] \times \mR} |\xi|^p \ dm^\ep (x,s,\xi) \leq C,
 \end{gather*}for any $t \in [0,T^\ep)$. Then letting $t \uparrow T^\ep$, we get the conclusion by Fatou's lemma.
\end{proof}

%%%%%%%%%%%%%%%%%
%%% lemma 2.4 %%%
%%%%%%%%%%%%%%%%%

\begin{lemm} \label{lemma1}
 For all $p \in [2, \infty)$ there exists a constant $C = C(p, u_0, T) \geq 0 $ such that for all $\ep \in (0,1)$ the approximate solutions $v^\ep$ and $\tilde{v}^\ep$ satisfy the following energy inequalities:
 \begin{gather}
  \mE \sup_{t \in [0,T^\ep)} \| v^\ep (t) \|^p_{L^p (\mT^d)} \leq C, \label{apriori1} %%%
\\
  \mE \sup_{t \in [0,T^\ep)} \| \tilde{v}^\ep (t) \|^p_{L^p (\mT^d)} \leq C. \label{apriori2} %%%
 \end{gather}
\end{lemm}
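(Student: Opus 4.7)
The plan is to prove the bound (\ref{apriori1}) for $v^\ep$ by a Burkholder--Davis--Gundy (BDG) type stochastic energy estimate, and then to deduce (\ref{apriori2}) as an essentially free corollary. Indeed, on every subinterval $[t_n^\ep, t_{n+1}^\ep)$ the process $\tilde v^\ep(t) = S(t - t_n^\ep)\tilde u_n^\ep$ satisfies, by the $L^p$-contraction (\ref{Lp}), the bound $\|\tilde v^\ep(t)\|_{L^p(\mT^d)} \le \|\tilde u_n^\ep\|_{L^p(\mT^d)}$; since $\tilde u_n^\ep = v^\ep(t_n^\ep -0)$ for $n \ge 1$ while $\tilde u_0^\ep = u_0$, one gets
\[ \sup_{t \in [0,T^\ep)}\|\tilde v^\ep(t)\|_{L^p(\mT^d)} \le \max\Big\{\|u_0\|_{L^p(\mT^d)},\,\sup_{t \in [0,T^\ep)}\|v^\ep(t)\|_{L^p(\mT^d)}\Big\}, \]
so (\ref{apriori2}) follows from (\ref{apriori1}) after taking $\mE$ on both sides.

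To establish (\ref{apriori1}), I would apply It\^o's formula to $\|v^\ep(t)\|^p_{L^p(\mT^d)}$ on each SDE subinterval $[t_n^\ep, t_{n+1}^\ep)$, exactly as in the proof of Lemma \ref{lem}, obtaining
\[ \|v^\ep(t)\|^p_{L^p} - \|u_n^\ep\|^p_{L^p} = p \sum_{k \ge 1}\!\int_{t_n^\ep}^t\!\!\int_{\mT^d}|v^\ep|^{p-2}v^\ep g_k(x,v^\ep)\,dx\,d\beta_k(s) + \frac{p(p-1)}{2}\int_{t_n^\ep}^t\!\!\int_{\mT^d}|v^\ep|^{p-2}G^2(x,v^\ep)\,dx\,ds. \]
Telescoping over $n$ and using the non-expansion $\|u_n^\ep\|_{L^p} \le \|\tilde u_n^\ep\|_{L^p}$ at each grid point (from (\ref{Lp}) applied to the deterministic parabolic step), I arrive at the global inequality
\[ \|v^\ep(t)\|^p_{L^p} \le \|u_0\|^p_{L^p} + M(t) + \frac{p(p-1)}{2}\int_0^t\!\int_{\mT^d}|v^\ep|^{p-2}G^2(x,v^\ep)\,dx\,ds, \qquad t \in [0,T^\ep), \]
where $M$ is the concatenation of the local stochastic integrals and is a genuine $(\mathscr{F}_t)$-martingale by (\ref{H3}) together with the pointwise a priori bound (\ref{estimate01}) (applied with a sufficiently large exponent).

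Taking $\mE\sup_{t\in[0,T^\ep)}$ on both sides, I would apply BDG combined with the weighted Cauchy--Schwarz estimate
\[ \sum_{k \ge 1}\Big(\int_{\mT^d}|v^\ep|^{p-2}v^\ep g_k\,dx\Big)^2 \le \|v^\ep\|^p_{L^p}\int_{\mT^d}|v^\ep|^{p-2}G^2(x,v^\ep)\,dx \le C\bigl(1+\|v^\ep\|^p_{L^p}\bigr)^2, \]
which uses (\ref{H3}) and the continuous embedding $L^p(\mT^d) \hookrightarrow L^{p-2}(\mT^d)$. Young's inequality then absorbs $\tfrac12\,\mE\sup_{t\in[0,T^\ep)}\|v^\ep(t)\|^p_{L^p}$ into the left-hand side, while the drift term and the remaining BDG term are bounded in expectation by (\ref{estimate01}), with constants independent of $\ep$. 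This yields (\ref{apriori1}).

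The main technical point requiring care is the telescoping across the grid: one must verify that the disjointly-supported local stochastic integrals concatenate into a square-integrable $(\mathscr{F}_t)$-martingale on $[0,T^\ep)$, and that the left-continuous jumps of $t \mapsto \|v^\ep(t)\|^p_{L^p}$ at the partition points $t_n^\ep$ -- produced by the deterministic parabolic step -- are non-positive and can therefore be discarded from the above energy balance. Once these bookkeeping items are secured, the remainder is a textbook BDG-plus-absorption argument, uniform in $\ep \in (0,1)$.
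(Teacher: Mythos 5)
Your proof is correct and is essentially the paper's argument: the authors obtain the same global energy identity by testing the concatenated kinetic formulation (\ref{kinetic}) with $(x,\xi)\mapsto p|\xi|^{p-2}\xi$ (the non-negative measure $m^\ep$ there playing exactly the role of your discarded non-positive jumps of $\|v^\ep(\cdot)\|^p_{L^p}$ at the grid points), then drop the dissipation term, apply the Burkholder--Davis--Gundy inequality with Cauchy--Schwarz, and absorb $\tfrac12\,\mE\sup_t\|v^\ep(t)\|^p_{L^p}$ using the pointwise bound (\ref{estimate01}); see (\ref{calc}). Two remarks. First, your derivation of (\ref{apriori2}) directly from (\ref{apriori1}) and the contraction (\ref{Lp}) is a small shortcut over the paper's ``similar fashion'' repetition; it is valid because the left limits $v^\ep(t_n^\ep-0)=\tilde u_n^\ep$ are captured by the supremum over $[0,T^\ep)$ thanks to the a.s.\ continuity of the trajectories of $R(\cdot,t_{n-1}^\ep)u^\ep_{n-1}$ in $L^p(\mT^d)$ on each subinterval. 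Second, the last inequality in your displayed Cauchy--Schwarz chain, $\le C(1+\|v^\ep\|^p_{L^p})^2$, must not be used inside the BDG estimate: it would lead to $C\,T^{1/2}\,\mE\bigl(1+\sup_t\|v^\ep(t)\|^p_{L^p}\bigr)$, which cannot be absorbed for general $T$. One has to keep the product form $\|v^\ep\|^p_{L^p}\int_{\mT^d}|v^\ep|^{p-2}G^2\,dx$, pull $\bigl(\sup_t\|v^\ep(t)\|^p_{L^p}\bigr)^{1/2}$ out of the first factor only, and bound the expectation of the remaining $\int_0^{T^\ep}\bigl(1+\|v^\ep(s)\|^p_{L^p}\bigr)\,ds$ by (\ref{estimate01}) --- which is what your appeal to (\ref{estimate01}) for the ``remaining BDG term'' indicates you intend, and is precisely the paper's computation in (\ref{calc}).
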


%%%%%%%%%%%%%%%%%%%%%%%%%%%%%%
%%% The proof of lemma 2.4 %%%
%%%%%%%%%%%%%%%%%%%%%%%%%%%%%%

\begin{proof}
 We prove the lemma only in the case of $v^\ep$. The case of $\tilde{v}^\ep$ will be done in a similar fashion. We can take $(x,\xi) \mapsto p |\xi|^{p-2} \xi$ for $p \geq 2$ as a test function in (\ref{kinetic}). Then we get
\begin{align*}
 & \hs{-4mm} - \| v^\ep (t) \|_{L^p (\mT^d)}^p + \| u_0 \|_{L^p (\mT^d)}^p \nn \\
 & \hs{4mm} = - p \sum_{k=1}^\infty \int_0^t \int_{\mT^d} g_k (x,v^\ep (s)) | v^\ep (s) |^{p-2} v^\ep (s) \ dx d\beta_k (s) \nn \\
 & \hs{8mm} - \frac{p(p-1)}{2} \int_0^t \int_{\mT^d} G^2 (x,v^\ep (s)) | v^\ep (s) |^{p-2} \ dx ds \nn \\
 & \hs{8mm} + p (p-1) \int_{\mT^d \times [0,t^\ep] \times \mR} | \xi |^{p-2} \ dm^\ep (x,s,\xi). 
\end{align*}
After dropping the term of the non-negative measure $m^\ep$, take supremum, expectation and use the assumption (\ref{H3}) to obtain
\begin{align*}
 & \mE \sup_{ t \in [0,T^\ep) } \| v^\ep (t) \|^p_{L^p (\mT^d)} \leq \mE \| u_0 \|^p_{L^p (\mT^d)} + C \left( 1 + \int_0^{T^\ep} \| v^\ep (s) \|_{L^p (\mT^d)}^p \ ds \right) \\
 & \hs{12mm} + p \mE \sup_{t \in [0,T^\ep )} \left| \sum_{k=1}^\infty \int_0^t \int_{\mT^d} g_k (x,v^\ep (s)) \left| v^\ep (s) \right|^{p-2} v^\ep (s) \ dx d\beta_k (s) \right|.
\end{align*}For the term of stochastic integral we employ the Burkholder-Davis-Gundy inequality, the Schwarz inequality, the assumption (\ref{H3}) and also the weighted Young inequality to obtain
\begin{align}
 & \hs{-2mm} p \mE \sup_{t \in [0,T^\ep )} \left| \sum_{k=1}^\infty \int_0^t \int_{\mT^d} g_k (x,v^\ep (s)) \left| v^\ep (s) \right|^{p-2} v^\ep (s) \ dx d\beta_k (s) \right| \nn \\
 & \hs{4mm} \leq C \mE \left( \sum_{k=1}^\infty \int_0^{T^\ep} \left( \int_{\mT^d} g_k (x,v^\ep (s)) \left| v^\ep (s) \right|^{p-1} \ dx \right)^2 ds \right)^{\frac{1}{2}} \nn \\
 & \hs{4mm} \leq C \mE \left( \int_0^{T^\ep} \| v^\ep (s) \|_{L^p (\mT^d)}^p \left( \int_{\mT^d} G^2 (x, v^\ep (s)) \left| v^\ep (s) \right|^{p-2} \ dx \right) \ ds \right)^{\frac{1}{2}} \nn \\
 & \hs{4mm} \leq C \mE \left( \sup_{t \in [0,T^\ep )} \| v^\ep (t) \|_{L^p (\mT^d)}^p \right)^{\frac{1}{2}} \left( 1+ \int_0^{T^\ep} \| v^\ep (s) \|_{L^p (\mT^d)}^p \ ds \right)^{\frac{1}{2}} \nn \\
 & \hs{4mm} \leq \frac{1}{2} \mE \sup_{t \in [0,T^\ep )} \| v^\ep (t) \|_{L^p (\mT^d)}^p + C \mE \left( 1 + \int_0^{T^\ep} \| v^\ep (s) \|_{L^p (\mT^d)}^p \ ds \right). \label{calc} %%%
\end{align}Therefore the conclusion easily follows from (\ref{estimate01}). 
\end{proof}

%%%%%%%%%%%%%%
%%% Indent %%%
%%%%%%%%%%%%%%

We now give some properties of the approximate solutions $v^\ep$, $\tilde{v}^\ep$ and the measure $m^\ep$ which follow from Lemmas \ref{lemma2} and \ref{lemma1}. Taking the square of (\ref{calc2}) for $p=0$, then expectation, we get
\begin{gather}
 \mE \left| m^\ep (\mT^d \times [0,T^\ep) \times \mR) \right|^2 \leq C, \label{measure01} %%%
\end{gather}where a constant $C$ depends on $p$, $T$, $u_0$ but not on $\ep$. Here we estimated the stochastic term in the same manner as that of (\ref{calc}) in order to obtain (\ref{measure01}). On the other hand, since $v^\ep$ satisfies the strong kinetic formulation (\ref{a1}) at every $t \in [t^\ep_n, t^\ep_{n+1})$, we have
\begin{align*}
 \mE \| v^\ep (t) - v^\ep (s) \|_{L^1 (\mT^d)} & = \int_{\mT^d} \mE \left| \sum_{k=1}^\infty \int_s^t g_k (x,v(x,r)) \  d\beta_k (r) \right| dx,
\end{align*}for any $t^\ep_n \leq s \leq t < t^\ep_{n+1}$. We estimate the right hand side of the above equality in the same manner as that of (\ref{calc}) again. Consequently, noting that $t^\ep_{n+1} - t^\ep_n \leq \ep$, one has that for any $n \in \mN \cup \{ 0 \}$, $t^\ep_n \leq s \leq t < t^\ep_{n+1}$
\begin{align}
 \mE \| v^\ep (t) - v^\ep (s) \|_{L^1 (\mT^d)} \leq C T \ep^{1/2} \label{timeL101} %%%
\end{align}where the constant $C$ depends on only $u_0$, $T$. Moreover, from the construction of the partition $\{ t^\ep_n \}$, it is clear that
\begin{align}
 \mE \| \tilde{v}^\ep (t) - \tilde{v}^\ep (s) \|_{L^1 (\mT^d)} \leq 2 \ep . \label{timeL102} %%%
\end{align}

%%%%%%%%%%%%%%%%%%
%%% Remark 2.3 %%%
%%%%%%%%%%%%%%%%%%

\begin{rem} \label{rem2.3} %%%
 Let $t^* \in (t_n^\ep, t^\ep_{n+1})$. Since $\tilde{v}^\ep \in C([t^\ep_n, t^\ep_{n+1}); L^1 (\mT^d))$, we have that there exists an increasing sequence $\{ t_j \}$ such that $\lim_{j \to \infty} t_j = t^*$ and $\lim_{j \to \infty} \tilde{v}^\ep (t_j) = \tilde{v}^\ep (t^*)$ for a.e. $x \in \mT^d$. From the first part of the proof of Theorem 6.4 in \cite{DeHoVo}, we find that $\lim_{j \to \infty} f^\pm (\tilde{v}^\ep (t_j), \xi) = f^\pm (\tilde{v}^\ep (t^*), \xi)$ weakly-$*$ in $L^\infty (\mT^d \times \mR)$. This means that the left weak-$*$ limit of $f^\pm (\tilde{v}^\ep (t), \xi)$ is equilibrium. Clearly, $f^\pm (\tilde{v})$ is at equilibrium. Therefore, according to \cite[Remark 12]{DeVo2} the associated kinetic measure $m^\ep_n$ in (\ref{a2}) has no atom on $[t^\ep_n, t^\ep_{n+1})$.
\end{rem}

%%%%%%%%%%%%%%%%%%
%%% Remark 2.4 %%%
%%%%%%%%%%%%%%%%%%

\begin{rem}
 Since the kinetic solution $u$ is defined as a function of $L^p (\Om ; L^\infty (0,T; L^p (\mT^d)))$, $p \geq 1$, the associated kinetic measure might have atoms if the kinetic function $f^\pm (u,\xi)$ was discontinuous with respect to $t$. However, it is shown in \cite[Corollary 3.4]{DeHoVo} that there exists a representatives of $u$ which has, in fact, surely continuous trajectories in $L^p (\mT^d)$. Therefore, one sees that the kinetic measure $m$ has no atom by the same reason as in Remark \ref{rem2.3} above.

%%%%%%%%%%%%%%
%%% Indent %%%
%%%%%%%%%%%%%%

In the proof of existence in \cite{DeHoVo} a kinetic solution $u$ is constructed first in $L^p (\Om \times (0,T); L^p (\mT^d)) \cap L^p (\Om; L^\infty (0,T; L^p (\mT^d)) )$ and then it is shown that $u$ belongs to $C ([0,T); L^p (\mT^d))$ almost surely. On the other hand, in the present paper a kinetic solution will be directly constructed in $C ([0,T); L^1 (\mT^d))$.
\end{rem}

%%%%%%%%%%%%%%%%%%%%%%%%%%%%%%%%%%%%%%%%%%%%%%%%%%%%%%%%%%%%%%%%%
%%%%%%%%%%%%%%%%%%%%%%%%%%%%%%%%%%%%%%%%%%%%%%%%%%%%%%%%%%%%%%%%%
%%%%%%%%%%%%%%%%%%%%%%%%%%%%%%%%%%%%%%%%%%%%%%%%%%%%%%%%%%%%%%%%%
\section{The time global approximate solutions} %%% section 3 %%%
%%%%%%%%%%%%%%%%%%%%%%%%%%%%%%%%%%%%%%%%%%%%%%%%%%%%%%%%%%%%%%%%%
%%%%%%%%%%%%%%%%%%%%%%%%%%%%%%%%%%%%%%%%%%%%%%%%%%%%%%%%%%%%%%%%%
%%%%%%%%%%%%%%%%%%%%%%%%%%%%%%%%%%%%%%%%%%%%%%%%%%%%%%%%%%%%%%%%%

In this section, we will show that the partition $\{ t^\ep_n \}$ constructed as above is actually a finite partition of $[0,T)$. 

%%%%%%%%%%%%%%%%%%%%%%%
%%% Proposition 3.1 %%%
%%%%%%%%%%%%%%%%%%%%%%%

\begin{prop} \label{longtime}
Let $\ep > 0$. There exists a natural number $M= M(\ep)$ such that $t_M^\ep = T$.
\end{prop}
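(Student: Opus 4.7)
The plan is to argue by contradiction. Suppose $t_n^\ep < T$ for every $n \in \mN$. Since $\{t_n^\ep\}$ is strictly increasing and bounded above by $T$, it converges to some $T^\star \in (0,T]$, and in particular $\de_n := t_{n+1}^\ep - t_n^\ep \to 0$. For all sufficiently large $n$ we have $\de_n < \ep$ and $t_{n+1}^\ep < T$, so the infimum in the definition of $t_{n+1}^\ep$ is attained by its first clause; combined with the continuity at $\tau = 0$ of $\tau \mapsto \mE\|S(\tau)\tilde{u}_n^\ep - \tilde{u}_n^\ep\|_{L^1(\mT^d)}$ (which follows from $L^1$-continuity of the deterministic trajectory $\tau \mapsto S(\tau)\tilde{u}_n^\ep$ pointwise in $\omega$ and dominated convergence based on the uniform $L^p$-bound (\ref{estimate02})), this forces
\begin{gather*}
 \mE \| S(\de_n) \tilde{u}_n^\ep - \tilde{u}_n^\ep \|_{L^1(\mT^d)} = \ep
\end{gather*}
for every such $n$. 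The goal is to contradict this equality by showing its left-hand side tends to $0$ as $n \to \infty$.

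Fix a large $N$ and introduce the ``frozen'' deterministic trajectory $\tilde{z}_n := S(t_n^\ep - t_N^\ep)\tilde{u}_N^\ep$ for $n \geq N$. Since $\tilde{u}_{k+1}^\ep = S(\de_k)\tilde{u}_k^\ep + J_k$ with $J_k := \tilde{u}_{k+1}^\ep - u_k^\ep = \int_{t_k^\ep}^{t_{k+1}^\ep} \Phi(v^\ep)\,dW$, iterating the $L^1$-contraction (\ref{contraction}) of $S$ produces $\|\tilde{u}_n^\ep - \tilde{z}_n\|_{L^1(\mT^d)} \leq \sum_{k=N}^{n-1}\|J_k\|_{L^1(\mT^d)}$. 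A triangle inequality, combined with the $L^1$-contraction and the semigroup identity $\|S(\de_n)\tilde{z}_n - \tilde{z}_n\|_{L^1(\mT^d)} \leq \|S(\de_n)\tilde{u}_N^\ep - \tilde{u}_N^\ep\|_{L^1(\mT^d)}$, then gives
\begin{gather*}
 \| S(\de_n)\tilde{u}_n^\ep - \tilde{u}_n^\ep \|_{L^1(\mT^d)} \leq 2 \|\tilde{u}_n^\ep - \tilde{z}_n\|_{L^1(\mT^d)} + \|S(\de_n)\tilde{u}_N^\ep - \tilde{u}_N^\ep\|_{L^1(\mT^d)}.
\end{gather*}
For fixed $N$, the second term tends to $0$ as $n \to \infty$: since $\de_n \to 0$ and $\tilde{u}_N^\ep(\omega)$ is a fixed $L^1$-state for almost every $\omega$, pointwise continuity of $S$ at $0$ and dominated convergence apply.

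To handle the first term, I note that the naive bound $\mE \sum\|J_k\|_{L^1(\mT^d)} \leq C \sum\sqrt{\de_k}$ need not be summable even though $\sum \de_k \leq T$. Instead one exploits the martingale orthogonality of the It\^o increments ($\mE \langle J_j, J_k\rangle_{L^2(\mT^d)} = 0$ for $j \neq k$), giving the norm-of-sum estimate $\mE\|\sum_{k=N}^{n-1}J_k\|_{L^2(\mT^d)}^2 \leq C\sum \de_k \leq C(T^\star - t_N^\ep)$, and feeds this cumulative-noise control into a doubling-of-variables comparison between the time-split trajectory $\tilde{u}_n^\ep$ and the deterministic reference $\tilde{z}_n$ (in the spirit of \cite[Proposition~3.2]{DeVo}). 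This upgrades the $L^2$ bound on $\sum J_k$ to an $L^1$ bound $\mE\|\tilde{u}_n^\ep - \tilde{z}_n\|_{L^1(\mT^d)} \leq C(T^\star - t_N^\ep)^{1/2}$. Letting $N \to \infty$ then sends the right-hand side of the displayed inequality to $0$, the desired contradiction.

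The main obstacle is precisely this last step: transferring the $L^2$-orthogonality of the stochastic increments into an $L^1$-bound on the nonlinear perturbation $\tilde{u}_n^\ep - \tilde{z}_n$, without appealing to any BV regularity of $u_0$. The kinetic doubling-of-variables technique is the natural tool to bridge this gap, and its realization in the alternating time-split setting is where the principal difficulty lies.
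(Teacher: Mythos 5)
Your overall architecture --- contradiction, $\de_n:=t_{n+1}^\ep-t_n^\ep\to0$, the first clause of the definition of $t_{n+1}^\ep$ eventually active so that $\mE\|S(\de_n)\tilde u_n^\ep-\tilde u_n^\ep\|_{L^1(\mT^d)}=\ep$, then contradict by showing this quantity tends to $0$ --- is sound and matches the paper's. But the step that would actually deliver the contradiction is missing, and you flag it yourself: the bound $\mE\|\tilde u_n^\ep-\tilde z_n\|_{L^1(\mT^d)}\le C(T^\star-t_N^\ep)^{1/2}$ is asserted, not proved, and the two ingredients you offer do not combine to give it. The $L^1$-contraction (\ref{contraction}) of $S$ only yields $\|\tilde u_n^\ep-\tilde z_n\|_{L^1}\le\sum_{k=N}^{n-1}\|J_k\|_{L^1}$, a bound by the \emph{sum of norms} of the increments, whereas the martingale orthogonality you invoke controls the \emph{norm of the sum} $\|\sum_k J_k\|_{L^2}$; because the nonlinear semigroup $S$ is interleaved between the increments, there is no representation of $\tilde u_n^\ep-\tilde z_n$ as a single stochastic integral on which the orthogonality could act, and $\sum_k\mE\|J_k\|_{L^1}\le C\sum_k\sqrt{\de_k}$ is, as you correctly observe, not controlled. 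The proposed rescue --- doubling of variables between the noisy split trajectory $\tilde u_n^\ep$ and the \emph{deterministic} reference $\tilde z_n$ --- has a structural obstruction: in the product computation the It\^o correction $\tfrac12G^2(x,\xi)\pa_\xi$ from the stochastic side has no counterpart on the deterministic side to recombine with, so instead of the small quantity $\sum_k|g_k(x,\xi)-g_k(y,\zeta)|^2$ controlled by (\ref{H4}) one is left with a term of order $\de^{-1}(t_n^\ep-t_N^\ep)$ that does not close the estimate. So the ``principal difficulty'' you name is real and unresolved, and the particular reference trajectory you chose makes it harder, not easier.

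The paper never compares with a deterministic reference. It proves that $\{u_n^\ep\}$ is Cauchy in $L^1(\Om\times\mT^d)$ by comparing $u_n^\ep$ with $u_m^\ep$, both of which carry the noise, so the It\^o corrections pair off via (\ref{H4}): iterating Lemmas \ref{DVR} and \ref{DVS} down to $u_0$ gives a spatial/velocity modulus of continuity uniform in $n$ with total error $Ct_n^\ep(\eta^{-1}\de+\eta^{-2}\de^{2\gamma}+\eta^2\de^{-1}+r(\de))$, while the kinetic formulations (\ref{a1})--(\ref{b2}) telescoped over $[t_{n+1}^\ep,t_{m+1}^\ep)$ contribute terms that vanish as $n,m\to\infty$ because $t_m^\ep-t_n^\ep\to0$ and $\mE m^\ep$ is finite (Lemma \ref{lemma2}); taking $\de=\eta^\theta$ with $\theta\in(1/\gamma,2)$ and letting $\eta\to0$ gives the Cauchy property. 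With the limit $\bar u^\ep$ in hand, the conclusion is exactly the elementary continuity argument you already have, with no rate required: $\mE\|S(\de_n)\tilde u_n^\ep-\tilde u_n^\ep\|\le2\,\mE\|\tilde u_n^\ep-\bar u^\ep\|+\mE\|S(\de_n)\bar u^\ep-\bar u^\ep\|<\ep$ for $n$ large. To repair your proof, replace the frozen deterministic reference $\tilde z_n$ by the limit of the sequence itself and establish the Cauchy property through the two doubling lemmas.
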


%%%%%%%%%%%%%%
%%% Indent %%%
%%%%%%%%%%%%%%

To this end, we introduce mollifiers on $\mT^d$ and $\mR$ denoted by $(\rho_\eta)$ and by $(\psi_\de)$, respectively, and set $\al_{\eta, \de} = \al_{\eta,\de} (x,y,\xi,\zeta) = \rho_\eta (x-y) \psi_\de (\xi - \zeta)$. We now need the following next two lemmas.

%%%%%%%%%%%%%%%%%
%%% Lemma 3.1 %%%
%%%%%%%%%%%%%%%%%

\begin{lemm}[Doubling of variables for (\ref{R})] \label{DVR}
 Let $\ep, \eta, \de > 0$. For each $\tilde{u}^\ep_n$, it holds that
 \begin{align*}
  & - \mE \int_{(\mT^d)^2 \times \mR^2} f^+ (\tilde{u}^\ep_n (x), \xi) f^- (\tilde{u}^\ep_n (y), \zeta) \al_{\eta , \de} \ d\zeta d \xi dy dx \\
  & \hs{4mm} \leq - \mE \int_{(\mT^d)^2 \times \mR^2} f^+ (u^\ep_{n-1} (x), \xi) f^- (u^\ep_{n-1} (y), \zeta) \al_{\eta , \de} \ d\zeta d \xi dy dx \\
  & \hs{7mm} + C (t^\ep_{n} - t^\ep_{n-1}) ( \eta^2 \de^{-1} + r(\de) ),
 \end{align*}where C is a constant independent of $\ep, \eta, \de$. 
\end{lemm}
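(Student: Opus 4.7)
The starting point is the observation that $\tilde u^\ep_n(x) = v(t^\ep_n, x)$, where $v(t, \cdot) = R(t, t^\ep_{n-1}) u^\ep_{n-1}$ solves the pure stochastic equation \eqref{R} on $[t^\ep_{n-1}, t^\ep_n]$. For each fixed $(x, y) \in (\mT^d)^2$ the two real-valued processes $v^+(t) := v(t, x)$ and $v^-(t) := v(t, y)$ are It\^o processes driven by the common family $(\beta_k)$ with coefficients $g_k(x, v^+)$ and $g_k(y, v^-)$; in particular the difference $w(t) := v^+(t) - v^-(t)$ is driftless with quadratic variation $\sum_k (g_k(x, v^+) - g_k(y, v^-))^2 \, dt$.

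Next I would reduce the quantity in the lemma to an integral of a smooth function of $w$. A direct computation of the $(\xi, \zeta)$-integral using the explicit step-function form of $f^\pm$ gives $\int_{\mR^2} f^+(v^+, \xi) f^-(v^-, \zeta) \psi_\de(\xi - \zeta)\, d\xi\, d\zeta = H(v^+ - v^-)$, where $H(r) := -\int_{-\infty}^r \Psi_\de(s)\, ds$ and $\Psi_\de(s) := \int_{-\infty}^s \psi_\de$. Then $H \in C^2(\mR)$ with $H''(r) = -\psi_\de(r)$, and the quantity in the lemma equals $\iint_{(\mT^d)^2} H(w(t))\, \rho_\eta(x - y)\, dx\, dy$. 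Applying It\^o's formula to $H(w(t))$, multiplying by $\rho_\eta(x - y)$, integrating in $(x, y)$ and over $t \in [t^\ep_{n-1}, t^\ep_n]$, and taking expectation makes the stochastic-integral term vanish, yielding the exact identity that the difference of the two integrals in the statement equals $-\tfrac12 \mE \int_{t^\ep_{n-1}}^{t^\ep_n} \iint_{(\mT^d)^2} \rho_\eta(x - y)\, \psi_\de(w)\, \sum_k (g_k(x, v^+) - g_k(y, v^-))^2\, dx\, dy\, ds$.

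To convert this identity into the stated bound, I would invoke assumption \eqref{H4}, which yields $\sum_k (g_k(x, v^+) - g_k(y, v^-))^2 \leq C(|x - y|^2 + |w|\, r(|w|))$. Since $\rho_\eta$ is supported in $\{|x - y| \leq \eta\}$ and $\psi_\de$ in $\{|w| \leq \de\}$, and since $r$ is non-decreasing, the integrand is bounded pointwise by $C \rho_\eta(x - y)\, \psi_\de(w)\, (\eta^2 + \de\, r(\de))$. Using $\|\psi_\de\|_\infty \leq C \de^{-1}$ together with $\iint_{(\mT^d)^2} \rho_\eta(x - y)\, dx\, dy = |\mT^d|$, Fubini gives $\iint \rho_\eta\, \psi_\de\, dx\, dy \leq C \de^{-1}$, so the right-hand side of the It\^o identity is bounded below by $-C (t^\ep_n - t^\ep_{n-1})(\eta^2 \de^{-1} + r(\de))$. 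Rearranging produces exactly the inequality in the statement.

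The main obstacle is the rigorous application of It\^o's formula at fixed spatial points $x, y$, i.e., giving pointwise-in-$x$ meaning to the $L^p(\mT^d)$-valued process $v(t, \cdot)$. This is legitimate because, for each fixed $x \in \mT^d$, the equation \eqref{R} reduces to the scalar It\^o diffusion $d v(t, x) = \sum_k g_k(x, v(t, x))\, d\beta_k(t)$, whose coefficients are Lipschitz in $v$ by \eqref{H4}, admitting a unique pathwise strong solution that coincides with the evaluation of $v(t, \cdot)$ at $x$. Alternatively, one may avoid this step by applying the strong kinetic formulation \eqref{stochast} of Lemma \ref{lemmaSDE} together with It\^o's product rule using a random test function built from $v^-$, which leads to the same correction term.
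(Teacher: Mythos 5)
Your argument is correct in substance and lands on exactly the right correction term, but it reaches it by a different, more elementary route than the paper. The paper simply cites the doubling-of-variables estimate of Debussche--Vovelle (Theorem 15 and estimate (35) of \cite{DeVo2}) specialized to the fluxless equation (\ref{R}): there the kinetic functions $f^{\pm}(v(t,\cdot),\xi)$ are paired via the strong kinetic formulation (\ref{stochast}) and It\^o's product rule, and the only surviving error is $\tfrac12\,\mE\int\!\!\iint \sum_k |g_k(x,\xi)-g_k(y,\zeta)|^2\,\al_{\eta,\de}\,d\de_{v(x)}(\xi)\,d\de_{v(y)}(\zeta)$, bounded by $C(t^\ep_n-t^\ep_{n-1})(\eta^2\de^{-1}+r(\de))$ through (\ref{H4}) --- this is precisely your fallback route, and also the term $K$ in Proposition \ref{doubling}. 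Your primary route instead exploits the complete absence of spatial derivatives in (\ref{R}) to apply It\^o's formula pointwise in $(x,y)$ to $H(v(t,x)-v(t,y))$ with $H''=-\psi_\de$; the quadratic-variation term is the same object, and the final bound ($|x-y|^2\le\eta^2$ on $\operatorname{supp}\rho_\eta$, $|w|r(|w|)\le\de r(\de)$ on $\operatorname{supp}\psi_\de$, $\|\psi_\de\|_\infty\le C\de^{-1}$) is identical. What your route buys is a self-contained two-line computation with no kinetic machinery; what it costs is the identification of the pointwise evaluation $v(t,x)$ with the solution of the scalar SDE $dv=\sum_k g_k(x,v)\,d\beta_k$, which you correctly flag as the delicate step.

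One correction there: (\ref{H4}) does \emph{not} make the coefficients Lipschitz in $v$ --- it only gives the modulus $\bigl(\sum_k|g_k(x,\xi)-g_k(x,\zeta)|^2\bigr)^{1/2}\le C\,(|\xi-\zeta|\,r(|\xi-\zeta|))^{1/2}$, and the paper deliberately drops the extra hypothesis $r(\de)\le C\de^\al$. The scalar SDE is nevertheless pathwise well posed, because $\rho(h)=(C\,h\,r(h))^{1/2}$ satisfies the Yamada--Watanabe condition $\int_{0^+}\rho(h)^{-2}\,dh=\infty$ (since $r$ is bounded near $0$, the integrand dominates $c/h$). So your conclusion stands, but the justification should be Yamada--Watanabe rather than Lipschitz continuity; alternatively, retreating to your second route via (\ref{stochast}) avoids the issue entirely and is what the paper actually does.
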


%%%%%%%%%%%%%
%%% Proof %%%
%%%%%%%%%%%%%

\begin{proof}
 This Lemma is a special case of \cite[Theorem 15]{DeVo2} such that no flux term appears. In particular see the estimate (35) in \cite{DeVo2} there.
\end{proof}

%%%%%%%%%%%%%%%%%
%%% Lemma 3.2 %%%
%%%%%%%%%%%%%%%%%

\begin{lemm}[Doubling of variables for (\ref{S})] \label{DVS}
 Let $\ep, \eta, \de > 0$. For each $u^\ep_n$, it holds that
 \begin{align*}
  & - \mE \int_{(\mT^d)^2 \times \mR^2} f^+ (u^\ep_n (x), \xi) f^- (u^\ep_n (y), \zeta) \al_{\eta , \de} \ d\zeta d \xi dy dx \\
  & \hs{4mm} \leq - \mE \int_{(\mT^d)^2 \times \mR^2} f^+ (\tilde{u}^\ep_{n} (x), \xi) f^- (\tilde{u}^\ep_{n} (y), \zeta) \al_{\eta , \de} \ d \zeta d \xi dy dx \\
  & \hs{7mm} + C (t^\ep_{n} - t^\ep_{n-1}) (\eta^{-1} \de + \eta^{-2} \de^{2 \gamma}) ,
 \end{align*}where C is a constant independent of $\ep, \eta, \de$. 
\end{lemm}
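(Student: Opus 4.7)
My plan is the deterministic, pathwise doubling-of-variables argument for the parabolic semigroup $S$ on the time slice $[t^\ep_n,t^\ep_{n+1})$. For $P$-a.e.\ $\omega$ the function $\tilde v^\ep$ satisfies the strong kinetic formulation \eqref{a2} with $\tilde v^\ep(\cdot,t^\ep_n)=\tilde u^\ep_n$ and $\tilde v^\ep(\cdot,t^\ep_{n+1}-)=u^\ep_n$; I rewrite it in distributional form on $\mT^d\times(t^\ep_n,t^\ep_{n+1})\times\mR$ as
\begin{equation*}
\pa_t f^\pm(\tilde v^\ep,\xi)+b(\xi)\cdot\nabla_x f^\pm(\tilde v^\ep,\xi)-A(\xi):D^2_x f^\pm(\tilde v^\ep,\xi)=\pa_\xi m^\ep_n,\qquad m^\ep_n\ge n^\ep_{1,n}\ge 0.
\end{equation*}
I would apply this identity for $f^+$ in the variables $(x,\xi)$ and for $f^-$ in the independent variables $(y,\zeta)$, then use the product rule with decoupled time variables (as in the proof of \cite[Theorem~15]{DeVo2}, restricted here to the purely deterministic setting), test against $\al_{\eta,\de}$, and integrate from $t^\ep_n$ to $t^\ep_{n+1}$. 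This will produce an identity of the form $G(t^\ep_{n+1})-G(t^\ep_n)=-I_{\mathrm{flux}}-I_{\mathrm{diff}}-I_{\mathrm{meas}}$, where
\begin{equation*}
G(t):=\int_{(\mT^d)^2\times\mR^2} f^+(\tilde v^\ep(x,t),\xi)\,f^-(\tilde v^\ep(y,t),\zeta)\,\al_{\eta,\de}\,d\zeta d\xi dy dx,
\end{equation*}
so that after taking expectation $G(t^\ep_{n+1})-G(t^\ep_n)$ matches the left-hand side minus the right-hand side of the stated inequality. The conclusion then reduces to showing $-\mE I_{\mathrm{meas}}\le 0$, $|\mE I_{\mathrm{flux}}|\le C(t^\ep_{n+1}-t^\ep_n)\eta^{-1}\de$, and $|\mE I_{\mathrm{diff}}|\le C(t^\ep_{n+1}-t^\ep_n)\eta^{-2}\de^{2\gamma}$.

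The measure term has the usual parabolic sign: after integrating by parts in $\xi$ and in $\zeta$, the monotonicity $\pa_\xi f^+\le 0$, $\pa_\zeta f^-\ge 0$ together with $m^\ep_n\ge 0$ yield $-I_{\mathrm{meas}}\le 0$, so this term is discarded. For the flux term the antisymmetry $\nabla_y\rho_\eta=-\nabla_x\rho_\eta$ reduces $I_{\mathrm{flux}}$ to the integral of $f^+f^-(b(\xi)-b(\zeta))\cdot\nabla_x\rho_\eta\,\psi_\de$; combining $|b(\xi)-b(\zeta)|\le C\de$ on $\mathrm{supp}\,\psi_\de$ (from $(\mathrm{H}_1)$, with the polynomial tails absorbed in expectation via \eqref{estimate02}) with $\|\nabla\rho_\eta\|_{L^1}\lesssim\eta^{-1}$ gives the bound on $|\mE I_{\mathrm{flux}}|$.

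The main obstacle is the diffusive contribution $I_{\mathrm{diff}}$, where I plan to use the Chen--Perthame identity. Using $D^2_y\rho_\eta=D^2_x\rho_\eta$ together with the chain rule \eqref{chain} (which turns $\delta_{\tilde v^\ep=\xi}$ into the square-gradient of $\int_0^{\tilde v^\ep}\si(\zeta)d\zeta$), the two diffusive terms combine with the two copies of the parabolic dissipation measure $n^\ep_{1,n}$ (extracted from $m^\ep_n\ge n^\ep_{1,n}$ in each set of variables) into a non-positive quadratic form plus a remainder of the schematic type
\begin{equation*}
\int f^+(\tilde v^\ep(x),\xi)\,f^-(\tilde v^\ep(y),\zeta)\,\bigl(\si(\xi)-\si(\zeta)\bigr)^2:D^2_x\rho_\eta\,\psi_\de\,d\xi d\zeta dx dy.
\end{equation*}
Assumption $(\mathrm{H}_2)$ gives $|\si(\xi)-\si(\zeta)|^2\le C|\xi-\zeta|^{2\gamma}\le C\de^{2\gamma}$ on $\mathrm{supp}\,\psi_\de$; together with $\|D^2\rho_\eta\|_{L^1}\lesssim\eta^{-2}$ and after dropping the non-positive part this yields $|\mE I_{\mathrm{diff}}|\le C(t^\ep_{n+1}-t^\ep_n)\eta^{-2}\de^{2\gamma}$. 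The technically delicate step --- and the main difficulty --- is precisely identifying this non-positive quadratic form and matching it against the correct pieces of the two parabolic dissipation measures so that only the $\de^{2\gamma}$-small remainder is left to estimate.
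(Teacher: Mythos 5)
Your proposal is correct and takes essentially the same route as the paper: the paper's proof of this lemma simply invokes the deterministic restriction of \cite[Theorem 3.3]{DeHoVo}, noting that only the flux term $I$ (bounded by $C(t^\ep_{n+1}-t^\ep_n)\eta^{-1}\de$) and the diffusion term $J$ (bounded by $C(t^\ep_{n+1}-t^\ep_n)\eta^{-2}\de^{2\gamma}$ via the Chen--Perthame combination of the second-order terms with the two parabolic dissipation measures and the $\gamma$-H\"older continuity of $\si$) need to be estimated --- exactly the decomposition and the sign/estimate structure you spell out.
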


%%%%%%%%%%%%%
%%% Proof %%%
%%%%%%%%%%%%%

\begin{proof}
This is a case of a restriction of \cite[Theorem 3.3]{DeHoVo} to the deterministic equation. In a similar manner as in the proof of it we can obtain our estimate. Note that we need only the estimate of the terms $I$ and $J$ which appear in there.
\end{proof}

%%%%%%%%%%%%%%%%%%%%%%%%%%%%%%%%
%%% Proof of Proposition 3.1 %%%
%%%%%%%%%%%%%%%%%%%%%%%%%%%%%%%%

\begin{proof}[Proof of Proposition \ref{longtime}]
 Let $\ep > 0$. By contradiction let us assume that $t^\ep_n < T$ for all $n \in \mN$. Since $\{ t^\ep_n \}$ is increasing sequence, there exists some $\bar{T} \leq T $ such that $t_n^\ep \uparrow \bar{T}$ as $n \to \infty$. Then $\{ u^\ep_n: n \in \mN \}$ is a Cauchy sequence in $L^1 (\Omega \times \mT^d)$. To see this, set for $\eta, \de > 0$ and $n,m \in \mN$, $n < m$,
 \begin{align*}
  & H_\pm (\eta , \de, n, m) = \mE \int_{\mT^d} \left( u^\ep_n (x) - u^\ep_m (x) \right)^\pm \ dx \\
  & \hs{21mm} + \mE \int_{(\mT^d)^2 \times \mR^2} f^\pm (u^\ep_n (x), \xi) f^\mp (u^\ep_m (y), \zeta) \al_{\eta, \de} \ d\zeta d\xi dy dx.
 \end{align*}
  Then by formulations (\ref{a1}) and (\ref{a2})
 \begin{align*}
  & \hs{-2mm} \mE \int_{\mT^d} \left( u^\ep_n (x) - u^\ep_m (x) \right)^\pm \ dx \\
  &  = - \mE \int_{(\mT^d)^2 \times \mR^2} f^\pm (u^\ep_n (x), \xi) f^\mp (u^\ep_n (y), \zeta) \al_{\eta, \de} \ d\zeta d\xi dy dx \\
  & \hs{4mm} - \mE \int_{t^\ep_{n+1}}^{t^\ep_{m+1}} \int_{(\mT^d)^2 \times \mR^2} f^\pm (u^\ep_n (x), \xi) f^\mp (\tilde{v}^\ep (y,s), \zeta) \\
  & \hs{42mm} \times (b(\xi) \cdot \nabla_y + A(\xi):D_y^2) \al_{\eta , \de} \ d\zeta d\xi dy dx ds \\
  & \hs{4mm} - \frac{1}{2} \mE \int_{t^\ep_n}^{t^\ep_m} \int_{(\mT^d)^2 \times \mR^2} f^\pm (u^\ep_n (x), \xi) G^2(y,\zeta) \pa_\zeta \al_{\eta , \de} \ d \de_{v^\ep (y,s)} (\zeta) d\xi dy dx \\
  & \hs{4mm} + \mE \int_{\mT^d \times [t^\ep_{n+1}, t^\ep_{m+1}) \times \mR} \int_{\mT^d \times \mR} f^\pm (u^\ep_n (x), \xi) \pa_\zeta \al_{\eta , \de} \ d\xi dx d m^\ep (y,s,\zeta) \\
  & \hs{4mm} + H_\pm (\eta , \de, n, m).
 \end{align*}We use Lemmas \ref{DVR} and \ref{DVS} repeatedly to get
 \begin{align*}
  & - \mE \int_{(\mT^d)^2 \times \mR^2} f^\pm (u_n^\ep (x), \xi) f^\mp (u_n^\ep (y), \zeta) \al_{\eta, \de} \ d\zeta d\xi dy dx \\
  & \hs{4mm} \leq - \mE \int_{(\mT^d)^2 \times \mR^2} f^\pm (u_0 (x), \xi) f^\mp (u_0 (y), \zeta) \al_{\eta, \de} \ d\zeta d\xi dy dx \\
  & \hs{7mm} + C t^\ep_n (\eta^{-1} \de + \eta^{-2} \de^{2\gamma} + \eta^2 \de^{-1} + r (\de)),
 \end{align*}and hence we get
 \begin{align*}
  & | H_\pm (\eta, \de, n, m)| \\
  & \hs{3mm} \leq \bigg| \mE \int_{(\mT^d)^2 \times \mR} f^\pm (u_n^\ep (x), \xi) \rho_{\eta} (x-y) \Big\{  f^\mp (u^\ep_m (x), \xi) - f^\mp (u^\ep_m (y), \xi) \Big\} \ d\xi dx \bigg| \\
  & \hs{5mm} + \bigg| \int_{(\mT^d)^2 \times \mR^2 } f^\pm (u^\ep_n (x), \xi) \al_{\eta, \de} \Big\{ f^\mp (u^\ep_m (y), \xi) - f^\mp (u^\ep_m (y), \zeta) \Big\} \ d\zeta d\xi dy dx \bigg| \\
  & \hs{3mm} \leq \mE \int_{(\mT^d)^2} \rho_\eta (x-y) \Big| u^\ep_m (x) - u^\ep_m (y) \Big| \ dy dx + \de \\
  & \hs{3mm} \leq -2 \mE \int_{(\mT^d)^2 \times \mR^2} f^+ (u^\ep_m (x),\xi) f^- (u^\ep_m (y), \zeta) \al_{\eta, \de} \ d\zeta d\xi dy dx + 2\de \\
  & \hs{3mm} \leq - 2 \mE \int_{(\mT^d)^2 \times \mR^2} f^+ (u_0 (x), \xi) f^- (u_0 (y), \zeta) \al_{\eta, \de} \ d\zeta d\xi dy dx \\
  & \hs{5mm} + C t^\ep_m (\eta^{-1} \de + \eta^{-2} \de^{2\gamma} + \eta^2 \de^{-1} + r (\de)) + 2\de.
 \end{align*}Therefore one has
 \begin{align*}
  & \mE \int_{\mT^d} \left| u^\ep_n (x) - u^\ep_m (x) \right| \ dx \\
  & \hs{4mm} \leq - 3 \mE \int_{(\mT^d)^2 \times \mR^2} f^+ ( u_0 (x), \xi) f^-( u_0 (y), \zeta)  \al_{\eta, \de} \ d\zeta d\xi dy dx \\
  & \hs{7mm} + C_{\eta , \de} \left( t^\ep_{m+1} - t^\ep_{n+1} + \mE m^\ep (\mT^d \times [t^\ep_{n+1} , t^\ep_{m+1}) \times \mR) \right) \\
  & \hs{7mm} + C t^\ep_m \left( \eta^{-1} \de + \eta^{-2} \de^{2 \gamma} + \eta^2 \de^{-1} + r (\de) \right) + 2\de,
 \end{align*}where $C_{\eta, \de}$ is a constant which depends on $\eta$ and $\de$. Since $\mE m^\ep (\mT^d \times [0,T) \times \mR) \leq C (u_0 , T)$ by Lemma \ref{lemma2}, first letting $n,m \to \infty$ and then letting $\eta \to 0$ with $\de = \eta^\theta$, $\theta \in (1/\gamma, 2)$, yields
 \begin{gather*}
  \lim_{n, m \to \infty} \mE \int_{\mT^d} \left| u^\ep_n (x) - u^\ep_m (x) \right| \ dx = 0.
 \end{gather*}Set $\bar{u}^\ep := \lim_{n \to \infty}u^\ep_n$ and
 \begin{gather*}
  h := \inf \{ s>0; \mE \| S(\bar{T} + s) \bar{u}^\ep - \bar{u}^\ep \|_{L^1 (\mT^d)} > \ep/3 \} \wedge \bar{T}/2.
 \end{gather*}Since $h > 0$, there exists $N_0 \in \mN$ such that for all $n \geq N_0$,
 \begin{gather*}
  \bar{T} - h < t^\ep_n < \bar{T} \hs{4mm} \text{and} \hs{4mm} \mE \| \bar{u}^\ep - u^\ep_n \|_{L^1 (\mT^d)} < \ep/3.
 \end{gather*}It follows from the $L^1$-contraction property (\ref{contraction}) that
 \begin{align*}
  & \hs{-4mm} \mE \| S (t^\ep_{n+1} - t^\ep_n) u^\ep_n - u^\ep_n \|_{L^1 (\mT^d)} \\
  & \leq \mE \| S (t^\ep_{n+1} - t^\ep_n) (u^\ep_n - \bar{u}^\ep) \|_{L^1 (\mT^d)} \\
  & \hs{4mm} + \mE \| S (t^\ep_{n+1} - t^\ep_n) \bar{u}^\ep - \bar{u}^\ep \|_{L^1 (\mT^d)} + \mE \| \bar{u}^\ep - u^\ep_n \|_{L^1 (\mT^d)} \\
  & \leq \mE \| S(t^\ep_{n+1} - t^\ep_n) \bar{u}^\ep - \bar{u}^\ep \|_{L^1 (\mT^d)} + 2 \mE \| \bar{u}^\ep - u^\ep_n \|_{L^1 (\mT^d)} < \ep.
 \end{align*}Therefore by the definition of $t^\ep_{n+1}$, we must have $t^\ep_{n+1} = (t^\ep_{n} + \ep) \wedge T$. However, since $t^\ep_{n+1} < T$ by our assumption, it yields that $t^\ep_{n+1} = t^\ep_n + \ep$, which contradicts that $t^\ep_n \uparrow \bar{T}$
\end{proof}

%%%%%%%%%%%%%%%%%%%%%%%%%%%%%%%%%%%%%%%%%%%%%%%%%%%%%%%%%%%%%%%%%%%%
%%%%%%%%%%%%%%%%%%%%%%%%%%%%%%%%%%%%%%%%%%%%%%%%%%%%%%%%%%%%%%%%%%%%
%%%%%%%%%%%%%%%%%%%%%%%%%%%%%%%%%%%%%%%%%%%%%%%%%%%%%%%%%%%%%%%%%%%%
\section{Convergence of the approximate solutions} %%% section 4 %%%
%%%%%%%%%%%%%%%%%%%%%%%%%%%%%%%%%%%%%%%%%%%%%%%%%%%%%%%%%%%%%%%%%%%%
%%%%%%%%%%%%%%%%%%%%%%%%%%%%%%%%%%%%%%%%%%%%%%%%%%%%%%%%%%%%%%%%%%%%
%%%%%%%%%%%%%%%%%%%%%%%%%%%%%%%%%%%%%%%%%%%%%%%%%%%%%%%%%%%%%%%%%%%%

We start from the following Chebyshev inequality:

%%%%%%%%%%%%%%%%%
%%% Lemma 4.1 %%%
%%%%%%%%%%%%%%%%%

\begin{lemm} \label{lemma5}
 Let $(X,\mu)$ be a finite mesure space and let $f \in L^1 (X)$. Set $M = \| f \|_{L^1 (X)}$. Then for any $\la > 0$, 
 \begin{gather*}
  \mu (\{ |f| \geq M \la^{-1} \}) \leq \la.
 \end{gather*}
\end{lemm}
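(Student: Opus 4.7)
The plan is to deduce this immediately from the classical Markov inequality. Recall that for any nonnegative measurable function $g$ on a measure space $(X,\mu)$ and any threshold $t>0$ one has the pointwise bound $t\cdot \mathbf{1}_{\{g\geq t\}} \leq g$, which upon integration over $X$ yields
\[
\mu(\{g\geq t\}) \leq \frac{1}{t}\int_X g\, d\mu.
\]
I would simply apply this with $g=|f|$ and $t = M\lambda^{-1}$ (so that $t>0$ in the nontrivial case $M>0$), obtaining
\[
\mu\bigl(\{|f|\geq M\lambda^{-1}\}\bigr) \leq \frac{\lambda}{M}\int_X |f|\, d\mu = \frac{\lambda}{M}\cdot M = \lambda,
\]
which is exactly the desired inequality.

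The only mild subtlety is the degenerate case $M=0$, in which $f=0$ $\mu$-almost everywhere and the threshold $M\lambda^{-1}$ collapses to zero; in that situation the statement is vacuous in the sense in which it is applied later, so no separate argument is required. There is no real obstacle here: the lemma is a standard one-line application of Markov's inequality, stated separately only because it will be invoked several times in Section 4 to convert the $L^1$-type bounds on the approximate solutions $v^\ep$, $\tilde v^\ep$ and on the kinetic measure $m^\ep$ (cf.\ Lemmas \ref{lemma2} and \ref{lemma1}) into quantitative smallness of exceptional sets.
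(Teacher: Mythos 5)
Your proof is correct and is essentially the same argument as the paper's: both rest on the bound $\int_{\{|f|\geq M\la^{-1}\}} |f|\, d\mu \geq \frac{M}{\la}\,\mu(\{|f|\geq M\la^{-1}\})$ combined with $\int_X |f|\, d\mu = M$, the only difference being that you state Markov's inequality directly while the paper phrases the same computation as a proof by contradiction. Your remark on the degenerate case $M=0$ is a point the paper passes over silently, and it is handled appropriately.
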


%%%%%%%%%%%%%
%%% Proof %%%
%%%%%%%%%%%%%

\begin{proof} 
 Set $A = \{ |f| \geq M \la^{-1} \}$. Assume that $\mu (A) > \la$. Then we have
 \begin{gather*}
  M = \int_X | f | \ d\mu \geq \int_{A} | f | \ d\mu \geq \frac{M}{\la} \mu (A) > M.
 \end{gather*}This is a contradiction.
\end{proof}

%%%%%%%%%%%%%%%%%%%%%%%
%%% Proposition 4.1 %%%
%%%%%%%%%%%%%%%%%%%%%%%

\begin{prop}[Doubling of variables] \label{doubling}
Let $\ep , \ep' ,  \eta ,  \de > 0$. Then for all $t \in [0,T)$ we have
\begin{align*}
& - \mE \int_{(\mT^d)^2 \times \mR^2} \Big\{ f^+ (v^\ep(x,t), \xi) + f^+ ( \tilde{v}^\ep (x,t), \xi) - f^+ ( {v}^\ep (x,t^\ep), \xi) \Big\} \\
& \hs{4mm} \times \Big\{ f^- (v^{\ep'}(y,t), \zeta) + f^- ( \tilde{v}^{\ep'} (y,t), \zeta) - f^- ( {v}^{\ep'} (y,t^{\ep'}), \zeta) \Big\} \al_{\eta,\de} \ d\zeta d\xi dy dx \\
& \leq G (\eta, \de, \ep, \ep'), 
\end{align*}where $G (\eta, \de, \ep, \ep')$ is nonnegative and satisfies 
\begin{gather}
 \lim_{\eta \downarrow 0} \overline{\lim_{\ep, \ep' \downarrow 0}} G (\eta, \eta^{\theta}, \ep, \ep') = 0 \label{limit00}
\end{gather}for $\theta \in (1/\gamma,2)$.
\end{prop}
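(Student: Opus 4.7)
The strategy is a doubling-of-variables argument that combines the kinetic formulation (\ref{kine}) for $v^\ep$ in the $x$-variable with the analogous formulation for $v^{\ep'}$ in the $y$-variable. Writing the two braces on the left-hand side of the proposition as
\[
A^\ep_\pm(x,t,\xi) := f^\pm(v^\ep(x,t),\xi) + f^\pm(\tilde v^\ep(x,t),\xi) - f^\pm(v^\ep(x,t^\ep),\xi),
\]
identity (\ref{kine}) represents $A^\ep_\pm$, tested against $\ph(x,\xi)$, as an It\^o semimartingale in $t$: its drift contains the transport--parabolic piece $\int_0^t\!\iint f^\pm(\tilde v^\ep)(b\cdot\nabla+A:D^2)\ph\,d\xi dx ds$, the measure piece $\int \pa_\xi\ph\,dm^\ep$, and the It\^o correction $\tfrac12\int G^2 \pa_\xi\ph\,dx ds$; its martingale part is built from $g_k(x,v^\ep)\ph(x,v^\ep)$; and its initial value contributes $f^\pm(u_0,\xi)$. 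The symmetric statement holds for $A^{\ep'}_\mp$ in $(y,\zeta)$.

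My plan is to apply It\^o's product formula to these two semimartingales, tensorised against $\al_{\eta,\de}(x,y,\xi,\zeta)$ and integrated over $(\mT^d)^2\times\mR^2$. Taking expectation kills the pure martingale terms and yields that the left-hand side of the proposition equals
\[
-\mE\!\int f^+(u_0(x),\xi)f^-(u_0(y),\zeta)\al_{\eta,\de} + \mathcal T + \mathcal C + \mathcal M,
\]
where $\mathcal T$ collects the transport--parabolic contributions produced by the action of $b$ and $A$ on $\al_{\eta,\de}$, $\mathcal C$ is the cross It\^o covariation built from $\sum_k g_k(x,v^\ep)g_k(y,v^{\ep'})$ together with the two $G^2$-corrections, and $\mathcal M$ collects the two measure terms $\int \pa_\xi\al_{\eta,\de}\,dm^\ep$ and the symmetric one in $\zeta$ against $m^{\ep'}$.

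Each term is then estimated as in the proofs of Lemmas \ref{DVR} and \ref{DVS}. The initial term vanishes as $\eta,\de\to 0$ because $\al_{\eta,\de}$ concentrates on the diagonal. For $\mathcal T$, the first-order part reduces after integration by parts to $Ct\,\eta^{-1}\de$ using (H$_1$), while the second-order part is handled as in Lemma \ref{DVS}: the chain rule (\ref{chain}), the $\gamma$-H\"older bound on $\si$ from (H$_2$), and Lemma \ref{lemma2} to absorb the parabolic dissipation together yield $Ct\,\eta^{-2}\de^{2\gamma}$. For $\mathcal C$, assumption (\ref{H4}) decomposes the integrand into a spatial H\"older piece giving $Ct\,\eta^2\de^{-1}$ and a modulus-of-continuity piece giving $Ct\,r(\de)$, as in Lemma \ref{DVR}. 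The measure contributions $\mathcal M$ are nonnegative after the sign analysis for products of $f^\pm$, exactly as in the standard kinetic doubling. Finally, the differences between $v^\ep$ and $\tilde v^\ep$ on the thin intervals $[t^\ep,t]$ are $O(\ep)$ in $L^1$ by (\ref{timeL101})--(\ref{timeL102}), contributing a $o_{\ep,\ep'}(1)$ error. Summing,
\[
G(\eta,\de,\ep,\ep') = C_T\bigl(\eta^{-1}\de + \eta^{-2}\de^{2\gamma} + \eta^2\de^{-1} + r(\de)\bigr) + o_{\ep,\ep'}(1),
\]
and the choice $\de=\eta^\theta$ with $\theta\in(1/\gamma,2)$ sends each of the four displayed terms to $0$ as $\eta\downarrow 0$, which is (\ref{limit00}).

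The main obstacle is the bookkeeping of $\mathcal C$. Because $v^\ep$ and $v^{\ep'}$ are driven by the same cylindrical $W$ but evolve on different meshes $\{t^\ep_n\}$ and $\{t^{\ep'}_n\}$, the martingale part of $A^\ep_+$ is a patchwork of stochastic integrals over those subintervals where the $\ep$-scheme is in its SDE phase, and similarly for $A^{\ep'}_-$. One has to verify that the cross quadratic variation pairs correctly on those subintervals, so that $d[M^\ep,M^{\ep'}]_t=\sum_k g_k(x,v^\ep)g_k(y,v^{\ep'})\,dt$ holds wherever both schemes are simultaneously in their stochastic phase, and that elsewhere the corresponding $G^2$-It\^o corrections cancel the surplus. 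This is the only place where the splitting structure really enters; once it is settled, the remaining computations follow the classical doubling-of-variables scheme.
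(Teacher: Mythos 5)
Your set-up is the right one and matches the paper's: the left-hand side is expanded via It\^o's product formula into the initial term plus a transport term, a second-order/parabolic term, a noise covariation term, and measure terms, and your estimates $C\eta^{-1}\de$, $C\eta^{-2}\de^{2\gamma}$, $C(\eta^2\de^{-1}+r(\de))$ for the classical pieces $I$, $J$, $K$ agree with the paper (which imports them from Lemmas \ref{DVR} and \ref{DVS}, i.e.\ from \cite{DeVo2} and \cite{DeHoVo}). The gap is in how you dispose of the terms created by the splitting itself. Because the tested quantity is $f^\pm(v^\ep(t))+f^\pm(\tilde v^\ep(t))-f^\pm(v^\ep(t^\ep))$ rather than the kinetic function of a single solution, the decomposition produces the extra terms $I'$, $J_1'$, $J_2'$, $K'$, each containing a factor $f^\pm(v^\ep(s),\xi)-f^\pm(v^\ep(s^\ep),\xi)$ (or its $\tilde v^\ep$ analogue) paired against an \emph{unbounded} object: the polynomially growing flux $b$, the quadratically growing $G^2$, or the random measure $m^{\ep'}$. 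Your claim that the measure contributions ``are nonnegative after the sign analysis'' and that the splitting errors contribute an $o_{\ep,\ep'}(1)$ simply because $\mE\|\tilde v^\ep(s)-v^\ep(s^\ep)\|_{L^1}\le 2\ep$ is where the argument breaks. The residual measure term
\begin{gather*}
J_2' \sim \mE\int \big\{f^+(v^\ep(s),\xi)-f^+(v^\ep(s^\ep),\xi)\big\}\,\pa_\zeta\al_{\eta,\de}\,dm^{\ep'}
\end{gather*}
has no sign, and it is an expectation of a product of two random quantities each of which is controlled only in expectation: the time-splitting increment is $O(\ep)$ only after taking $\mE$ at fixed $s$, while $m^{\ep'}(\Om)$-type quantities are merely bounded in $L^2(\Om)$ by (\ref{measure01}). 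Cauchy--Schwarz does not close the estimate, since $\mE\sup_s\|\tilde v^\ep(s)-v^\ep(s^\ep)\|_{L^1}^2$ is bounded but not small. The paper calls this term ``somewhat subtle'' and spends roughly half the proof on it: it extracts weak-$*$ limits $m^0_\ep$ of the $m^{\ep'}$, builds the outer measure $m^0$ and the deterministic Radon measure $\nu=\mE m^0$, proves $m^0(\omega,\cdot)\ll\nu$ a.s., and uses the Radon--Nikodym derivative $D_\nu m^0$ together with the Chebyshev-type Lemma \ref{lemma5} to split $L_\ep=L^1_\ep+L^2_\ep$ and show $\lim_{\ep\downarrow 0}L_\ep=0$ --- obtaining only a limit, not a rate, which is why (\ref{limit00}) is stated as an iterated limit rather than a quantitative bound.

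Similarly, $I'$, $J_1'$ and $K'$ each need a truncation in $\xi$ (the sets $B_R$, $D_R$ in the paper) combined with the uniform moment bounds of Lemma \ref{lemma1} to handle the growth of $b$ and $G^2$, before the $L^1$-increment estimates (\ref{timeL101})--(\ref{timeL102}) and dominated convergence can be invoked. Your final paragraph locates the ``only place where the splitting structure really enters'' in the cross quadratic variation of the two martingale parts; that pairing is in fact the routine part (it is absorbed into $K$ via (\ref{H4}) exactly as in \cite{DeVo2}), whereas the genuine difficulty of this proposition is the family of mismatch terms above, and in particular $J_2'$. As written, your proposal would not yield (\ref{limit00}).
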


%%%%%%%%%%%%%
%%% Proof %%%
%%%%%%%%%%%%%

\begin{proof}
 To simplify the notation we will drop the variable $x$ of $v^\ep (x,t)$ and the variable $y$ of $v^{\ep'} (y,t)$. In the same way as in \cite[Proposition 3.2]{DeHoVo} and \cite[Proposition 3.2]{Ho}, we can obtain
 \begin{align*}
  & - \mE \int_{(\mT^d)^2 \times \mR^2} \Big\{ f^+ (v^\ep (t), \xi) + f^+ (\tilde{v}^\ep (t), \xi) - f^+ ({v}^\ep (t^\ep), \xi) \Big\} \\
  & \hs{7mm} \times \Big\{ f^- (v^{\ep'} (t), \zeta) + f^- (\tilde{v}^{\ep'} (t), \zeta) - f^- ({v}^{\ep'} (t^{\ep'}), \zeta) \Big\} \al_{\eta,\de} \ d\zeta d\xi dy dx \\
  & \leq - \mE \int_{(\mT^d)^2 \times \mR^2} f^+ (u_0, \xi) f^- (u_0, \zeta) \al_{\eta,\de} \ d\zeta d\xi dy dx \\
  & \hs{55mm} +I+J+K+I'+J_1'+J_2'+K'
\end{align*}where
\begin{align*}
  & I = - \mE \int_0^t \int_{(\mT^d)^2 \times \mR^2} f^+ (\tilde{v}^\ep (s), \xi) f^- (\tilde{v}^{\ep'} (s), \zeta) \\
  & \hs{40mm} \times (b(\xi) - b(\zeta)) \cdot \nabla_x \al_{\eta,\de} \ d\zeta d\xi dy dx ds,
  \end{align*}
  \begin{align*}
  & J = - \mE \int_0^t \int_{(\mT)^d \times \mR^2} f^+ (\tilde{v}^\ep (s), \xi) f^- (\tilde{v}^{\ep'} (s), \zeta) \\
  & \hs{40mm} \times (A(\xi)+A(\zeta)):D^2_x \al_{\eta,\de} \ d\zeta d\xi dy dx ds \\
  & \hs{6mm} - \mE \int_{[0,t] \times \mT^d \times \mR} \int_{\mT^d \times \mR} \al_{\eta,\de} \ d\de_{\tilde{v}^\ep (s)} (\xi) dx dn^{\ep '}_1 (y,s,\zeta) \\
  & \hs{6mm} - \mE \int_{[0,t] \times \mT^d \times \mR} \int_{\mT^d \times \mR} \al_{\eta,\de} \ d\de_{\tilde{v}^{\ep '} (s)} (\zeta) dy dn^\ep_1 (x,s,\xi),
  \end{align*}
  \begin{align*}
  & K = \frac{1}{2} \mE \int_0^t \int_{(\mT^d)^2 \times \mR^2} \sum_{k=1}^\infty \left| g_k (x,\xi) - g_k (y,\zeta) \right|^2 \al_{\eta,\de} \\
  & \hs{55mm} \times d\de_{v^{\ep'} (s)} (\zeta) d\de_{v^\ep (s)}(\xi) dy dx ds.
  \end{align*}
  \begin{align*}
   & I' = - \mE \int_0^t \int_{(\mT^d)^2 \times \mR^2} \Big\{ f^+ (v^\ep (s), \xi) - f^+ ({v}^\ep (s^\ep), \xi) \Big\} \\
   & \hs{40mm} \times f^- (\tilde{v}^{\ep'} (s), \zeta) b(\zeta) \cdot \nabla_y \al_{\eta,\de} \ d\zeta d\xi dy dx ds, \\
   & \hs{6mm} - \mE \int_0^t \int_{(\mT^d)^2 \times \mR^2} \Big\{ f^- (v^{\ep'} (s), \zeta) - f^- ({v}^{\ep'} (s^{\ep'}), \zeta) \Big\} \\
   & \hs{40mm} \times f^+ (\tilde{v}^\ep (s), \xi) b(\xi) \cdot \nabla_x \al_{\eta,\de} \ d\zeta d\xi dy dx ds,
  \end{align*}
  \begin{align*}
   & J_1' = - \mE \int_0^t \int_{(\mT^d)^2 \times \mR^2} \Big\{ f^+ (v^\ep (s), \xi) - f^+ ({v}^\ep (s^\ep), \xi) \Big\} \\
   & \hs{40mm} \times f^- (\tilde{v}^{\ep'} (s), \zeta) A(\zeta) : D^2_y \al_{\eta,\de} \ d\zeta d\xi dy dx ds, \\
   & \hs{6mm} - \mE \int_0^t \int_{(\mT^d)^2 \times \mR^2} \Big\{ f^- (v^{\ep'} (s), \zeta) - f^- ({v}^{\ep'} (s^{\ep'}), \zeta) \Big\} \\
   & \hs{40mm} \times f^+ (\tilde{v}^\ep (s), \xi) A(\xi) : D^2_x \al_{\eta,\de} \ d\zeta d\xi dy dx ds,
  \end{align*}
  \begin{align*}
   & J_2' = \mE \int_{[0,t] \times \mT^d \times \mR} \int_{\mT^d \times \mR} \Big\{ f^+ (v^\ep (s), \xi) - f^+ ({v}^\ep (s^\ep), \xi) \Big\} \\
   & \hs{65mm} \times \pa_\zeta \al_{\eta, \de} \ d\xi dx dm^{\ep'} (y,s,\zeta) \\
   & \hs{6mm} + \mE \int_{[0,t] \times \mT^d \times \mR} \int_{\mT^d \times \mR} \Big\{ f^- (v^{\ep'} (s), \zeta) - f^- ({v}^{\ep'} (s^{\ep'}), \zeta) \Big\} \\
   & \hs{65mm} \times \pa_\xi \al_{\eta, \de} \ d\zeta dy dm^{\ep} (x,s,\xi),
  \end{align*}and
  \begin{align*}
   & K' = - \frac{1}{2} \mE \int_0^t \int_{(\mT^d)^2 \times \mR^2} \Big\{ f^+ (\tilde{v}^\ep (s), \xi) - f^+ ({v}^\ep (s^\ep), \xi) \Big\} \\
   & \hs{50mm} \times G^2 (y,\zeta) \pa_\zeta \al_{\eta,\de} \ d\de_{v^{\ep'} (s) }(\zeta) d\xi dy dx ds \\
   & \hs{6mm} - \frac{1}{2} \mE \int_0^t \int_{(\mT^d)^2 \times \mR^2} \Big\{ f^- (\tilde{v}^{\ep'} (s), \zeta) - f^- ({v}^{\ep'} (s^{\ep'}), \zeta) \Big\} \\
   & \hs{50mm} \times G^2 (x,\xi) \pa_\xi \al_{\eta,\de} \ d\zeta d\de_{v^{\ep} (s) }(\xi) dy dx ds,
  \end{align*}where $n_1^\ep$ and $n_1^{\ep'}$ are the parabolic dissipation measures associated with $\tilde{v}^\ep$ and $\tilde{v}^{\ep '}$, respectively. By similar arguments as in \cite[Theorem 3.3]{DeHoVo} and \cite[Theorem 3.3]{Ho} we can obtain
  \begin{gather*}
   |I| \leq C \eta^{-1} \de, \ |J| \leq C \eta^{-2} \de^{2 \gamma}, \ K \leq C (\eta^2 \de^{-1} + r(\de)),
  \end{gather*}where a constant $C$ is independent of $\ep, \ep', \eta, \de$. We now show that
  \begin{gather*}
   \lim_{\ep,\ep' \downarrow 0} |I'| = \lim_{\ep,\ep' \downarrow 0} |J_1'| = \lim_{\ep,\ep' \downarrow 0} |J_2'| = \lim_{\ep,\ep' \downarrow 0} |K'| = 0.
  \end{gather*}We estimate the first term of $I'$ as follows:
  \begin{align*}
   & \Bigg| \mE \int_0^t \int_{(\mT^d)^2 \times \mR^2} \Big\{ f^+ (v^\ep (s), \xi) - f^+ ({v}^\ep (s^\ep), \xi) \Big\} \\
   & \hs{40mm} \times f^- (\tilde{v}^{\ep'} (s), \zeta) b(\zeta) \cdot \nabla_y \al_{\eta,\de} \ d\zeta d\xi dy dx ds \Bigg| \\
   & \leq \mE \int_0^t \int_{(\mT^d)^2 \times \mR_\zeta \times B_R} \Big| f^+ (v^\ep (s), \xi) - f^+ ({v}^\ep (s^\ep), \xi) \Big| |b(\zeta)| \big| \nabla_y \al_{\eta,\de} \big| \\
   & \hs{2mm} + \mE \int_0^t \int_{(\mT^d)^2 \times \mR_\zeta \times B_R^c} \Big| f^+ (v^\ep (s), \xi) - f^+ ({v}^\ep (s^\ep), \xi) \Big| |b(\zeta)| \big| \nabla_y \al_{\eta,\de} \big| \\
   & \leq C_{R,\eta,\de} \mE \int_0^t \int_{\mT^d} \left| v^\ep (s) - {v}^\ep (s^\ep) \right| dx ds \\
   & \hs{2mm} + C_{\eta,\de} \int_0^t \int_{\{ |v^\ep (s)| \wedge |\tilde{v}^\ep (s)| \geq R/2 \}} (1+|v^\ep (s)|^{p+1} + |\tilde{v}^\ep (s)|^{p+1}) \ d(\mathcal{L}_d \otimes P) ds,
  \end{align*}where $B_R = (-R,R)$, $p$ is the polynomial exponent of $b$ and $\mathcal{L}_d$ is the Lebesgue measure on $\mT^d$. It follows that 
  \begin{gather*}
   \mE \| v^\ep (s) - {v}^\ep (s^\ep) \|_{L^1 (\mT^d)} \to 0, \hs{3mm} \text{as} \hs{3mm} \ep \downarrow 0.
  \end{gather*}Indeed, we can get the above limit from the following calculation: if $s \in [ t^\ep_n, t^\ep_{n+1})$,
  \begin{align*}
   & \mE \| {v}^\ep (s) - u^\ep_n \|_{L^1 (\mT^d)} = \mE \int_{\mT^d} \left| \sum_{k=1}^\infty \int_{t_n^\ep}^s g_k (x, v (t)) \ d\beta_k (t) \right| dx \\
   & \hs{30.5mm} \leq C \mE \int_{\mT^d} \left( \int_{t_n^\ep}^{t_{n+1}^\ep} G^2 (x,v (t)) \ dt \right)^{\frac{1}{2}} dx \\
   & \hs{30.5mm} \leq C \left( \mE \int_{\mT^d} \int_{t_n^\ep}^{t_{n+1}^\ep} \left( 1+ \left| v (t) \right|^2 \right) \ dt dx \right)^{\frac{1}{2}} \\
   & \hs{30.5mm} \leq C \sqrt{\ep},
  \end{align*}where we used the Burkholder-Davis-Gundy inequality, (\ref{H3}) and Lemma \ref{lemma1}. Moreover it is easy to see from the estimates (\ref{apriori1}) and (\ref{apriori2}) that
  \begin{gather*}
   (\mathcal{L}_d \otimes P) \big( \{ | v^\ep (s) | \wedge | \tilde{v}^\ep (s) | > R  \} \big) \to 0, \hs{3mm} \text{as} \hs{3mm} R \to \infty \hs{3mm} \text{uniformly in} \hs{1.5mm} \ep.
  \end{gather*}
   Therefore by the Lebesgue convergence theorem we deduce $\lim_{\ep,\ep' \downarrow 0} |I'| = 0$. In the same manner as above, we also get that $\lim_{\ep, \ep' \downarrow 0} | J_1' | = 0$ (notice that $A(\xi)$ becomes bounded from the assumption (${\rm H_2}$)). Next we estimate $K'$ as follows. Set $D_R = \{ y \in \mT^d; |v^{\ep'} (s)| \leq R \}$. Then
   \begin{align*}
    & \bigg| \frac{1}{2} \mE \int_0^t \int_{(\mT^d)^2} \Big( \psi_\de (\tilde{v}^\ep (s) - v^{\ep'} (s)) - \psi_\de ({v}^\ep (s^\ep) - v^{\ep'} (s)) \Big) \\
    & \hs{62mm} \times G^2 (y, v^{\ep'} (s) ) \rho_\eta (x-y) \ dx dy ds \bigg| \\
    & \leq C \| \psi'_\de \|_{L^\infty (\mR)} \mE \int_0^t \int_{(\mT^d)^2} \Big( 1 + |v^{\ep'} (s)|^2 \Big) |\tilde{v}^\ep (s) - {v}^\ep (s^\ep)| \rho_\eta (x - y) \ dx dy ds \\
    & \leq C_\de \mE \int_0^t \int_{\mT^d} |\tilde{v}^\ep (s) - {v}^\ep (s^\ep)| \ dx ds \\
    & \hs{4mm} + C_\de \mE \int_0^t \int_{\mT^d} \int_{D_R} |\tilde{v}^\ep (s) - {v}^\ep (s^\ep)| R^2 \rho_\eta (x-y) \ dx dy ds \\
    & \hs{4mm} + C_{\de, \eta} \mE \int_0^t \int_{\mT^d} \int_{D_R^c} |\tilde{v}^\ep (s) - {v}^\ep (s^\ep)| |v^{\ep'} (s)|^2 \ dx dy ds \\
    & \leq C_\de (1 + R^2) T \ep \\
    & \hs{4mm} + C_{\de, \eta} \Big\{ \int_0^t \mE \int_{\mT^d} |\tilde{v}^\ep (s) - {v}^\ep (s^\ep)|^2 \ dxds \Big\}^{\frac{1}{2}} \Big\{ \int_0^t \mE \int_{D_R^c} | v^{\ep'} (s)|^4 \ dyds \Big\}^{\frac{1}{2}}.
   \end{align*}Here we used (\ref{H3}) and the following inequality
   \begin{gather*}
    \mE \int_{\mT^d} |\tilde{v}^\ep (s) - {v}^\ep (s^\ep)| \ dx \leq 2 \ep \hs{4mm} \text{for } s \in [0,T)
   \end{gather*}by our construction of the partition of $[0,T)$. Thanks to Lemma \ref{lemma1} we see that the second term on the right hand of the above inequality tends to $0$ as $R \to \infty$ uniformly in $\ep$ and $\ep'$. Consequently we obtain that $\lim_{\ep, \ep' \downarrow 0} |K'| = 0$. Finally the term $J_2'$ is somewhat subtle. To estimate it we set
   \begin{gather*}
    L_{\ep} = \sup_{0 < \ep' < 1} \mE \int_{\mT^d \times [0,T) \times \mR} \| \tilde{v}^\ep (s) - {v}^\ep (s^\ep) \|_{L^1 (\mT^d)} \ dm^{\ep'} (y,s,\zeta).
   \end{gather*}It is easy to see that
   \begin{gather*}
    |J'_2| \leq C_{\eta, \de} (L_\ep + L_{\ep'}).
   \end{gather*}By Lemma \ref{lemma2} and the arguments in \cite[Section 4.1.2]{DeVo2} there exist $m_\ep^0 \in L^2_w (\Om; \mathcal{M}_b)$ and a subsequence $\{ \ep'_n \}$, which might depend on $\ep$, such that $m^{\ep_n'} \rightharpoonup m^0_\ep$ in $L^2_w (\Om; \mathcal{M}_b)$-weak* and 
   \begin{gather*}
    L_{\ep} = \lim_{n \to \infty} \mE \int_{\mT^d \times [0,T) \times \mR} \| \tilde{v}^\ep (s) - {v}^\ep (s^\ep) \|_{L^1 (\mT^d)} \ dm^{\ep'_n} (y,s,\zeta),
   \end{gather*}where $\mathcal{M}_b = \mathcal{M}_b (\mT^d \times [0,T) \times \mR)$ is the space of finite measures on $\mT^d \times [0,T) \times \mR$ and $L^2_w (\Om; \mathcal{M}_b)$ is the space of all weak*-measurable mappings $m: \Om \to \mathcal{M}_b$ with $\mE \| m \|^2_{\mathcal{M}_b} < \infty$. Note that the sequence $\{ \ep_n ' \}$ doesn't necessarily converge to $0$. We define for a.s. the measure $m^0$ on $[0,T)$ by
   \begin{gather*}
    m^0 (A) = \sup_{0 < \ep < 1} \int_{\mT^d \times [0,T) \times  \mR} {\bf 1}_A (t) \ d m^0_\ep (x, t, \xi) \hs{4mm} \text{for } A \in \mathcal{B} ([0,T)).
   \end{gather*}In the standard terminology $m^0$ is probably called an outer measure, but we follow the book of Evans and Gariepy \cite{EvGa} in order to refer to it. Here we remark that $m^0$ can be considered as a weakly measurable map from $\Omega$ to the space of finite Radon (outer) measures on $[0,T)$, since we may assume without loss of generality that $\ep$ tends to $0$ through a sequence. Then we define the Radon measure $\nu$ on $[0,T)$ by
   \begin{gather*}
    \nu (A) = \mE m^0 (\cdot, A ) \hs{4mm} \text{for} \ A \in \mathcal{B}([0,T)).
   \end{gather*}We claim that $m^0 (\omega, \cdot)$ is absolutely continuous with respect to $\nu$ a.s. $\omega$. To show this we set
   \begin{gather*}
    \mathcal{N} = \{ B \in G_\de; \nu (B) = 0 \},
   \end{gather*}where $G_\de$ is the set of all $G_\de$-sets of $[0,T)$. For each $B \in \mathcal{N}$ there exists a null set $N_B \in \mathscr{F}$ satisfying $m^0 (\omega, B) = 0$ for all $\omega \in N_B^c$. Set $N = \cup_{B \in \mathcal{N}} N_B$. Since $\mathcal{N}$ is a countable set, $N$ is also null set of $\mathscr{F}$ such that
   \begin{gather*}
    m^0 (\omega, B) = 0 \hs{4mm} \text{for all} \ B \in \mathcal{N} \ \text{and all} \ \omega \in N^c.
   \end{gather*}Now take any $A \in \mathcal{B} ([0,T))$ with $\nu (A) = 0$. By the regularity of the Radon measure $\nu$ there exists $A^* \in G_\de$ satisfying $A \subset A^*$ and $\nu (A^*) = \nu (A) = 0$. Hence $A^* \in \mathcal{N}$ and $m^0 (\omega, A) = m^0 (\omega, A^*) = 0$ for all $\omega \in N^c$. Thus we have that $m^0 (\omega, \cdot) << \nu (\cdot)$ a.s. $\omega$. \\
\indent Let $D_\nu m^0$ be the derivative of $m^0$ with respect to $\nu$ (see \cite[p. 37]{EvGa}). Recall that $t \mapsto m^0 (\omega, (t-r, t+r))$ and $t \mapsto \nu ((t-r, t+r))$ are upper continuous by \cite[p. 39]{EvGa} and $\omega \mapsto m^0 (\omega, (t-r, t+r))$ is measurable thanks to $m^0_\ep \in L_w^2 (\Om; \mathcal{M}_b ([0,T)))$. Hence 
   \begin{gather*}
    (\omega, r) \mapsto \frac{m^0 (\omega, (t-r, t+r))}{\nu ((t-r, t+r))}
   \end{gather*}is $\mathscr{F} \otimes \mathcal{B} ([0,T))$-measurable. Then 
   \begin{gather*}
    D_\nu m^0 = \lim_{k \to \infty} \frac{m^0 (\omega, (t-1/k, t+ 1/k))}{\nu ((t - 1/k, t+ 1/k))} \hs{4mm} \nu \text{-a.e.}
   \end{gather*}and it is $\mathscr{F} \otimes \mathcal{B} ([0,T))$-measurable. By the Radon-Nikodym theorem (\cite[p. 40]{EvGa})
   \begin{gather*}
    m^0 (A) = \int_A D_\nu m^0 \ d\nu
   \end{gather*}for $P$-a.s., for $A \in \mathcal{B} ([0,T))$. We use Lemma \ref{lemma5} with $X = \Omega \times [0,T)$, $\mu = P \otimes \nu$ and $f = D_\nu m^0$. It is easy to compute that
   \begin{gather*}
    \nu (X) = \int_X |f| \ d\mu = \mE m^0 ([0,T)) =: M.
   \end{gather*}Set
   \begin{gather*}
    A_\la = \{ (\omega, s) \in \Om \times [0,T); |f| \geq \frac{M}{\la} \}, \hs{4mm} \la > 0.
   \end{gather*}Thus Lemma \ref{lemma5} assures that
   \begin{gather*}
    \int_{A_\la} \ d\nu dP = \mu (A_\la) \leq \la.
   \end{gather*}Then we write
   \begin{align*}
    & L_{\ep} \leq \mE \int_{[0,T)} \| \tilde{v}^\ep (s) - {v}^\ep (s^\ep) \|_{L^1 (\mT^d)} \ m^0 (\omega, ds) \\
    & \hs{4mm} = \mE \int_{[0,T)} {\bf 1}_{A_\la} (\omega, s) \| \tilde{v}^\ep (s) - {v}^\ep (s^\ep)  \|_{L^1 (\mT^d)} \ m^0 (\omega, ds) \\
    & \hs{7mm} + \mE \int_{[0,T)} {\bf 1}_{A^c_\la} (\omega, s) \| \tilde{v}^\ep (s) - {v}^\ep (s^\ep)  \|_{L^1 (\mT^d)} \ m^0 (\omega, ds) \\
    & \hs{4mm} =: L^1_\ep + L^2_\ep.
   \end{align*}By \cite[Section 1.3]{EvGa} and the construction of $\tilde{v}^\ep$, $L^2_\ep$ is estimated as follows.
   \begin{align*}
    & L^2_\ep = \mE \int_{[0,T)} {\bf 1}_{A^c_\la} (\omega, s) \| \tilde{v}^\ep (s) - {v}^\ep (s^\ep)  \|_{L^1 (\mT^d)} f (\omega, s) \ d\nu (s) \\
    & \hs{4mm} \leq \frac{M}{\la} \int_{[0,T)} \mE \| \tilde{v}^\ep (s) - {v}^\ep (s^\ep) \|_{L^1 (\mT^d)} \ d\nu (s) \\
    & \hs{4mm} \leq \frac{M}{\la} \ep \nu ([0,T)).
   \end{align*}On the other hand, in order to obtain an estimate of $L^1_\ep$, set
   \begin{gather*}
    A_0 = \lim_{\la \downarrow 0} A_\la = \bigcap_{\la > 0} A_\la.
   \end{gather*}Since $P \otimes \nu (A_0) \leq P \otimes \nu (A_\la) \leq \la$ for every $\la > 0$, it follows that $P \otimes \nu (A_0) = 0$. By the Fubini-Tonelli theorem we have $\nu ([A_0]_\omega) = 0$ for a.s. $\omega \in \Om$, where $[A_0]_\omega = \{ t \in [0,T); (\omega, t) \in A_0 \}$ is the $\omega$-section of $A_0$. Since $m^0 << \nu$ a.s., it follows that
   \begin{gather*}
    m^0 (\omega, [A_0]_\omega) = 0 \hs{4mm} \text{a.s. } \omega.
   \end{gather*}By the Schwartz inequality and Lemmas \ref{lemma1} and \ref{lemma2}, we have
   \begin{align*}
    & L^1_\ep \leq \Big\{ \mE \int_{[0,T)} \| \tilde{v}^\ep (s) - {v}^\ep (s^\ep) \|_{L^1 (\mT^d)}^2 \ dm^0 \Big\}^{\frac{1}{2}} \Big\{ \mE \int_{[0,T)} {\bf 1}_{A_\la} \ dm^0 \Big\}^{\frac{1}{2}} \\
    & \hs{4mm} \leq \Big\{ \mE \sup_s \| \tilde{v}^\ep (s) - {v}^\ep (s^\ep) \|_{L^1 (\mT^d)}^4 \Big\}^{\frac{1}{4}} \Big\{ \mE | m^0 ([0,T))|^2 \Big\}^{\frac{1}{4}} \Big\{ \mE m^0 (\omega, [A_\la]_\omega) \Big\}^{\frac{1}{2}} \\
    & \hs{4mm} \leq C \Big\{ \mE m^0 (\omega, [A_\la]_\omega) \Big\}^{\frac{1}{2}}.
   \end{align*}Consequently we have
   \begin{gather*}
    \lim_{\la \downarrow 0} \sup_{0 < \ep < 1} L^1_\ep \leq C \mE m^0 (\omega, [A_\la]_\omega) = 0.
   \end{gather*}Since 
   \begin{align*}
    \overline{\lim_{\ep \downarrow 0}} L_{\ep} \leq \sup_{0 < \ep < 1} L^1_\ep + \overline{\lim_{\ep \downarrow 0}} L^2_\ep = \sup_{0 < \ep < 1} L^1_\ep,
   \end{align*}letting $\la \downarrow 0$ gives $\lim_{\ep \downarrow 0} L_{\ep} = 0$. This means $\lim_{\ep, \ep' \downarrow 0}| J_2' | = 0$. Therefore if we let
   \begin{align*}
    & G (\eta, \de, \ep, \ep') =  - \mE \int_{(\mT^d)^2 \times \mR^2} f^+ (u_0, \xi) f^- (u_0, \zeta) \al_{\eta,\de} \ d\zeta d\xi dy dx \\
  & \hs{55mm} +I+J+K+I'+J_1'+J_2'+K',
   \end{align*}we get the assertion of the proposition.
\end{proof}

%%%%%%%%%%%%%%%%%%%%%%%%%%%%
%%% Proof of Theorem 2.1 %%%
%%%%%%%%%%%%%%%%%%%%%%%%%%%%

\begin{proof}[Proof of Theorem \ref{main}]Define
 \begin{align*}
  & F (\eta,\de,\ep,\ep') = \mE \int_{\mT^d} (v^\ep (x,t) - {v}^{\ep'} (x,t))^+ \ dx \\
  & \hs{19mm} + \mE \int_{(\mT^d)^2 \times \mR^2} \al_{\eta,\de} \Big\{ f^+ (v^\ep(t), \xi) + f^+ ( \tilde{v}^\ep (t), \xi) - f^+ ( {v}^\ep (t^\ep), \xi) \Big\} \\
  & \hs{25mm} \times \Big\{ f^- (v^{\ep'}(t), \zeta) + f^- ( \tilde{v}^{\ep'} (t), \zeta) - f^- ( {v}^{\ep'} (t^{\ep'}), \zeta) \Big\} \ d\zeta d\xi dy dx.
 \end{align*}Then by Proposition \ref{doubling} 
 \begin{gather}
  \mE \int_{\mT^d} (v^\ep (x,t) - {v}^{\ep'} (x,t))^+ \ dx \leq G (\eta,\de,\ep,\ep') + F (\eta,\de,\ep,\ep'). \label{cauchy}
 \end{gather}We now show that
 \begin{gather}
  \lim_{\eta \downarrow 0} \overline{\lim_{\ep, \ep' \downarrow 0}} F (\eta, \eta^\theta, \ep, \ep')=0. \label{limit}
 \end{gather}Observe that
 \begin{align*}
  & | F (\eta,\de,\ep,\ep')| \\
  & \leq \bigg| - \mE \int_{(\mT^d)^2 \times \mR^2} \Big\{ f^+ (v^\ep (t), \xi) + f^+ (\tilde{v}^\ep (t), \xi) - f^+ ({v}^\ep (t^\ep), \xi) \Big\} \\
  & \hs{17mm} \times \Big\{ f^- (v^{\ep'} (t), \zeta) + f^- (\tilde{v}^{\ep'} (t), \zeta) - f^- ({v}^{\ep'} (t^\ep), \zeta) \Big\} \al_{\eta, \de} \ d\zeta d\xi dy dx \\
  & \hs{7mm} + \mE \int_{\mT^d \times \mR} f^+ (v^\ep (t), \xi) f^- (v^{\ep'} (t), \xi) \ d\xi dx \bigg| \\
  & \leq \bigg| - \mE \int_{(\mT^d)^2 \times \mR^2} \Big\{ f^+ (v^\ep (t), \xi) + f^+ (\tilde{v}^\ep (t), \xi) - f^+ ({v}^\ep (t^\ep), \xi) \Big\} \\
  & \hs{17mm} \times \Big\{ f^- (v^{\ep'} (t), \zeta) + f^- (\tilde{v}^{\ep'} (t), \zeta) - f^- ({v}^{\ep'} (t^\ep), \zeta) \Big\} \al_{\eta, \de} \ d\zeta d\xi dy dx \\
  & \hs{7mm} + \mE \int_{(\mT^d)^2 \times \mR^2} f^+ (v^\ep (t), \xi) f^- (v^{\ep'} (t), \zeta) \al_{\eta,\de} \ d\zeta d\xi dy dx \bigg| \\
  & \hs{3mm} + \bigg| - \mE \int_{(\mT^d)^2 \times \mR^2} f^+ (v^\ep (t), \xi) f^- (v^{\ep'} (t), \zeta) \al_{\eta,\de} \ d\zeta d\xi dy dx \\
  & \hs{7mm} + \mE \int_{(\mT^d)^2 \times \mR} f^+ (v^\ep (x,t), \xi) f^- (v^{\ep'} (y,t), \xi) \rho_\eta (x-y) \ d\xi dy dx \bigg| \\
  & \hs{3mm} + \bigg| - \mE \int_{(\mT^d)^2 \times \mR} f^+ (v^\ep (x,t), \xi) f^- (v^{\ep'} (y,t), \xi) \rho_\eta (x-y) \ d\xi dy dx \\
  & \hs{7mm} + \mE \int_{\mT^d \times \mR} f^+ (v^\ep (t), \xi) f^- (v^{\ep'} (t), \xi) \ d\xi dx \bigg| \\
  & =: F_1 (\eta,\de,\ep,\ep') + F_2 (\eta,\de,\ep,\ep') + F_3 (\eta,\de,\ep,\ep').
 \end{align*}Since $\mE \int_{\mT^d} |\tilde{v}^\ep (x,t) - {v}^{\ep} (x,t^\ep)| \ dx < 2\ep$ by virtue of the construction of the approximate solutions, we easily get $F_1 (\eta,\de,\ep,\ep') < 2\ep + 2\ep'$. Moreover it is easy to see that $F_2 (\eta,\de,\ep,\ep') < \de$. Finally $F_3 (\eta,\de,\ep,\ep')$ is estimated as follows;
 \begin{align*}
  & \hs{-2mm} F_3 (\eta,\de,\ep,\ep') \\
  & = \bigg| - \mE \int_{\mT^d \times \mR} f^+ (v^\ep (x,t), \xi) \\
  & \hs{20mm} \times  \left\{ \int_{\mT^d} f^- (v^{\ep'} (y,t),\xi) \rho_\eta (x-y) \ dy - f^- (v^{\ep'} (x,t),\xi) \right\} \ d\xi dx \bigg| \\
  & \leq \mE \int_{(\mT^d)^2 \times \mR} \Big| f^- (v^{\ep'} (y,t), \xi) - f^- (v^{\ep'} (x,t), \xi) \Big| \rho_\eta (x-y) \ dydx \\
  & = 2 \mE \int_{(\mT^d)^2} \big( v^{\ep'} (y,t) - v^{\ep'} (x,t) \big)^+ \rho_\eta (x-y) \ dydx \\
  & \leq -2 \mE \int_{(\mT^d)^2} \int_{\mR^2} \Big\{ f^+ (v^{\ep'} (t), \zeta) + f^+ (\tilde{v}^{\ep'} (t), \zeta) - f^+ ({v}^{\ep'} (t^{\ep'}), \zeta) \Big\} \\
  & \hs{10mm} \times \Big\{ f^- (v^{\ep'} (t), \xi) + f^- (\tilde{v}^{\ep'} (t), \xi) - f^- ({v}^{\ep'} (t^{\ep'}), \xi) \Big\} \al_{\eta,\de} \ d\zeta d\xi dy dx \\
  & \hs{4mm} + F_1 (\eta,\de,\ep',\ep') + F_2 (\eta,\de,\ep',\ep') \\
  & \leq G (\eta,\de,\ep',\ep') + 4 \ep' + 2 \de,
 \end{align*}where $G$ is the same function that appears in Proposition \ref{doubling}. Thus we obtain the limit (\ref{limit}). Consequently we have that $\{ v^\ep; \ep>0 \}$ is a Cauchy sequence in $L^\infty (0,T; L^1 (\Omega \times \mT^d))$ from (\ref{limit00}), (\ref{cauchy}) and (\ref{limit}). Besides, by (\ref{timeL101}) and (\ref{timeL102}) we have
 \begin{align*}
  & \hs{-2mm} \mE \| v^\ep (t) - \tilde{v}^\ep (t) \|_{L^1 (\mT^d)} \\
  & \hs{2mm} \leq \mE \| v^\ep (t) - {v}^\ep (t^\ep) \|_{L^1 (\mT^d)} + \mE \| {v}^\ep (t^\ep) - \tilde{v}^\ep (t) \|_{L^1 (\mT^d)} \leq C \ep^{1/2} + \ep
 \end{align*}for all $t \in [0,T)$. Therefore, $\{ \tilde{v}^\ep; \ep > 0 \}$ is also a Cauchy sequence and it's limit is the same as the limit of $\{ v^\ep; \ep > 0 \}$. 
 
%%%%%%%%%%%%%%
%%% indent %%%
%%%%%%%%%%%%%%

Once one has obtained that the approximate solution $\{ v^\ep \}$ ( or $\{ \tilde{v}^\ep \}$ ) converges to $u$ in the sense of $L^\infty (0,T;L^1 (\Omega \times \mT^d))$-norm, one can proceed to the same arguments as in \cite[Theorem 6.4]{DeHoVo}. In particular, $\{ v^\ep \}$ ( or $\{ v^{\ep'} \}$ ) is a Cauchy sequence in $L^1 (\Om \times (0,T), \mathcal{P}, dP \otimes dt; L^1 (\mT^d))$, and hence the limit $u$ is also predictable. From Lemmas \ref{lemma1} and \ref{lemma2} there exist kinetic measures $m$, $o_1$ and a positive null sequence $\{ \ep_n \}$ such that
 \begin{align*}
  & m^{\ep_n} \rightharpoonup m \hs{3mm} \text{in} \hs{3mm} L_w^2 (\Omega; \mathcal{M}_b)\text{-weak*}, \\
  & n_1^{\ep_n} \rightharpoonup o_1 \hs{3mm} \text{in} \hs{3mm} L_w^2 (\Omega; \mathcal{M}_b)\text{-weak*}, \\
  & \text{and} \hs{2mm} m \geq o_1 \geq n_1 \hs{2mm} \text{a.s.},
 \end{align*}where $n_1$ is defined by (\ref{measure}) with the function $u$ (for a detailed exposition we refer the reader to \cite[Theorem 6.4]{DeHoVo}). Let $\ph \in C_c^\infty (\mT^d \times \mR)$. By the same way as in the proof of \cite[Theorem 6.4]{DeHoVo}, we have
 \begin{gather*}
  \int_{\mT^d} \int_\mR f^+ (v^{\ep_n}, \xi) \ph (x,\xi) \ d\xi dx \to \int_{\mT^d} \int_\mR f^+ (u, \xi) \ph (x,\xi) \ d\xi dx \hs{4mm} \text{a.e. } \omega, t
 \end{gather*}
  Using the It\^{o} isometry and the dominated convergence theorem, we have
  \begin{gather*}
  \hs{-30mm} \sum_{k=1}^\infty \int_0^t \int_{\mT^d} g_k (x,v^{\ep_n}) \ph (x,v^{\ep_n}) \ dx d\beta_k (s) \nn \\
  \hs{24mm} \to \sum_{k=1}^\infty \int_0^t \int_{\mT^d} g_k (x,u) \ph (x,u) \ dx d\beta_k (s), \hs{4mm} \text{in} \hs{2mm} L^2(\Omega)
 \end{gather*}by selecting a subsequence if necessary. Hence by selecting a further subsequence if necessary, the just above limit holds almost surely. Finally, using the dominated convergence theorem again, we have
 \begin{gather*}
  \hs{-30mm} \int_0^t \int_{\mT^d} G^2 (x,v^{\ep_n}) \pa_\xi \ph (x,v^{\ep_n}) \ dxds \\
  \hs{24mm} \to \int_0^t \int_{\mT^d} G^2 (x,u) \pa_\xi \ph (x,u) \ dxds, \hs{4mm} \text{a.s.}
 \end{gather*}Therefore passing to the limit in (\ref{kine}), we have
 \begin{align*}
& - \int_{\mT^d} \int_\mR f^\pm (u (t), \xi) \ph \ d\xi dx + \int_{\mT^d} \int_\mR f^\pm (u_0, \xi) \ph \ d\xi dx \nn \\
& \hs{7mm} + \int_0^t \int_{\mT^d} \int_{\mR} f^\pm (u (s), \xi) (b (\xi) \cdot \nabla + A (\xi):D^2 ) \ph \ d\xi dx ds \nn \\
& = - \sum_{k=1}^\infty \int_0^t \int_{\mT^d} g_k (x,u (s)) \ph (x,u(s)) \ dx d\beta_k (s) \nn \\
& \hs{7mm} -\frac{1}{2} \int_0^t \int_{\mT^d} G^2 (x,u(s)) \pa_\xi \ph (x,u(s)) \ dx ds + \int_{[0,t] \times \mT^d \times \mR} \pa_\xi \ph \ dm , 
\end{align*}for a.e. $\omega, t$. Multiplying the above by $\psi' (t)$, $\psi \in C_c^\infty ([0,T))$, and integrating with respect to $t \in [0,T)$, we can see that $u$ satisfies the kinetic formulation (\ref{kine1}).  Therefore we conclude that $u$ is a kinetic solution to (\ref{dp1}), (\ref{dp2}).
 
%%%%%%%%%%%%%%
%%% indent %%%
%%%%%%%%%%%%%%

Moreover, the $L^1$-contraction property is a straightforward consequence of a comparison result which follows from Proposition \ref{doubling}, and the fact that a kinetic solution has almost surely continuous trajectories is inferred from \cite[Corollary 3.4]{DeHoVo}. Thus the proof is complete. \\
\end{proof}

\noindent {\bf Acknowledgements} \mbox{} \\[-3mm]

The first author have been partialy supported by Grant-in-Aid Sientific Research the Japan Society for the Promotion
of Science and the second author by Waseda University Grant for Special Research Projects (No. 20155-042). The authors wish to thank the referees very heartily for their comments helping to improve the manuscript.

%%%%%%%%%%%%%%%%%%%%%%%%%%%%%%%%%%%%%%%%%%%%%%%%%%%%%%%%%%%%%%%%%%%%%%%%%%%%%%
%% References
%%
%% Following citation commands can be used in the body text:
%% Usage of \cite is as follows:
%%   \cite{key}          ==>>  [#]
%%   \cite[chap. 2]{key} ==>>  [#, chap. 2]
%%   \citet{key}         ==>>  Author [#]

%% References with bibTeX database:

\bibliographystyle{model1-num-names}
\bibliography{<your-bib-database>}

%% Authors are advised to submit their bibtex database files. They are
%% requested to list a bibtex style file in the manuscript if they do
%% not want to use model1-num-names.bst.

%% References without bibTeX database:

\end{document}